\numberwithin{equation}{section}
\newtheorem{theorem}{Theorem}[section]
\newtheorem{corollary}{Corollary}[section]
\newtheorem{lemma}[theorem]{Lemma}
\theoremstyle{definition}
\newtheorem*{remarks*}{Remarks}
\numberwithin{equation}{section}
\title{A congruence involving the quotients of Euler and its applications (III)}
\author[H. Zhong]{Hao Zhong}
\address{(H. Zhong) School of Mathematical Sciences, Zhejiang University, Hangzhou, 310027, China}
\curraddr{}
\email{11435011@zju.edu.cn}
\thanks{}
\author[S. Chern]{Shane Chern}
\address{(S. Chern) School of Mathematical Sciences, Zhejiang University, Hangzhou, 310027, China}
\curraddr{}
\email{chenxiaohang92@gmail.com}
\thanks{}
\author[T. Cai]{Tianxin Cai}
\address{(T. Cai) School of Mathematical Sciences, Zhejiang University, Hangzhou, 310027, China}
\curraddr{}
\email{txcai@zju.edu.cn}
\thanks{}
\keywords{Binomial coefficient, Morley's congruence}
\subjclass[2010]{11A25, 11B65, 11B68}
\begin{document}

\maketitle

\thispagestyle{empty}

\begin{abstract}
In this paper, we will present several new congruences involving binomial coefficients under integer moduli, which are the continuation of the previous two work by Cai \textit{et al.} (2002, 2007).
\end{abstract}

\section{Introduction}

In 1895, Morley \cite{Mor1895} proved the following beautiful and profound congruence involving binomial coefficients, that is, for any prime $p\ge 5$,
\begin{equation}\label{eq:Morley}
(-1)^{(p-1)/2}\binom{p-1}{(p-1)/2}\equiv 4^{p-1} \pmod{p^3}.
\end{equation}
However, his proof, which is due to an explicit form of De Moivre's theorem, fails to deal with other binomial coefficients. In 2002, Cai \cite{Cai2002} extended Morley's congruence to integer moduli through a generalization of Lehmer's congruence. More precisely, he proved
\begin{equation}\label{eq:Cai1}
\prod_{d\mid n}\binom{d-1}{(d-1)/2}^{\mu(n/d)}\equiv (-1)^{\phi(n)/2}4^{\phi(n)}
\begin{cases}
(\bmod{\ n^3}) & \text{if }3\nmid n,\\
(\bmod{\ n^3/3}) & \text{if }3\mid n,
\end{cases}
\end{equation}
for odd $n> 1$. When $n$ is an odd prime $p\ge 5$, \eqref{eq:Cai1} becomes \eqref{eq:Morley}. According to Cosgrave and Dilcher \cite{CD2013}, \eqref{eq:Cai1} appears to be the first analogue for composite moduli of a ``Lehmer type'' congruence. Later in 2007, Cai \textit{et al.} \cite{CFZ2007} proposed several new congruences of the same type, in which $(d-1)/2$ is replaced by $\lfloor d/3\rfloor$, $\lfloor d/4\rfloor$, and $\lfloor d/6\rfloor$, respectively, where $\lfloor x\rfloor$ denotes the largest integer not greater than $x$. In this paper, we will further extend the work in \cite{Cai2002} and \cite{CFZ2007}.

First, we introduce a new generalization of Euler's totient. For $k$ an integer and $f$ a number theoretic function, we define
\begin{equation}\label{eq:Eulertot1}
\phi^{(k)}_{f}(n):=\sum_{d\mid n}(\frac{n}{d})^{k}f(d)\mu(d).
\end{equation}
And if $f\equiv1$, then $\phi^{(k)}_{f}(n)$ equals to Jordan totient function. It's easy to prove that $\phi^{(k)}_{f}(n)=n^{k}\prod_{p\mid n}(1-f(p)p^{-k})$ when $f(n)$ is multiplicative. And Our main results are

\begin{theorem}\label{th:1}
Let $n$ be a positive integer and $(n,6)=1$. For $e=2$, $3$, $4$ or $6$, we have

\begin{equation}\label{eq:th:1}
\sum^{\lfloor n/e\rfloor}_{\substack{r=1\\ (r,n)=1}}\frac{1}{r^{2}}\equiv -J_{e}(n)n^{\phi(n)-2}\phi_{J_{e}}^{(2-\phi(n))}(n)\frac{B_{\phi(n)-1}(\frac{1}{e})}{\phi(n)-1}\pmod{n},
\end{equation}
where $B_{n}$ is the $n$th Bernoulli number, $B_{n}(x)$ is the Bernoulli polynomial, and $J_{e}(n)$ is the Jacobi symbol for $n$ and $e$,
\begin{equation*}
J_{e}(n)=(\frac{n}{e})=
\begin{cases}
1  & \text{if }n\equiv 1 \pmod e\\
-1 & \text{if }n\equiv -1 \pmod e
\end{cases}
\end{equation*}
since $(n,6)=1$.
\end{theorem}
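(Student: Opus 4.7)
The plan is to convert the rational sum to an integer power sum via Euler's theorem, apply Möbius inversion to strip the coprimality condition, evaluate the resulting power sum through Faulhaber's identity, and extract the main contribution from the reflection symmetry of Bernoulli polynomials. The central obstacle will be controlling the higher-order Taylor terms.

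Set $m := \phi(n) - 1$. For $(r,n) = 1$, Euler's theorem gives $r^{-2} \equiv r^{m-1} \pmod{n}$, and Möbius inversion over $d \mid \gcd(r,n)$ yields
\[
\sum_{\substack{r \le n/e \\ (r,n)=1}} \frac{1}{r^2} \equiv \sum_{d \mid n} \mu(d)\, d^{m-1} \sum_{j=1}^{N_d} j^{m-1} \pmod{n},
\]
with $N_d := \lfloor n/(ed) \rfloor$. Since $m$ is odd and $\ge 3$, we have $B_m = 0$, so Faulhaber's formula collapses the inner sum to $B_m(N_d + 1)/m$. The relation $(n/d, e) = 1$ forces $n/d \equiv J_e(n/d) \pmod{e}$, hence $N_d + 1 = n/(ed) + a_d/e$ for some $a_d \in \{1, e-1\}$ selected by $J_e(n/d) = J_e(n)\,J_e(d)$; combining this with the reflection $B_m(1-x) = -B_m(x)$ for odd $m$ gives the key identity $B_m(a_d/e) = -J_e(n)\,J_e(d)\,B_m(1/e)$.

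Taylor-expanding $B_m(N_d + 1) = \sum_{k=0}^{m} \binom{m}{k} B_k(a_d/e)\, (n/(ed))^{m-k}$, the $k = m$ term contributes, after summing over $d$,
\[
-\frac{J_e(n)\, B_m(1/e)}{m} \sum_{d \mid n} \mu(d)\, J_e(d)\, d^{m-1},
\]
and by \eqref{eq:Eulertot1} this sum equals $n^{\phi(n)-2}\phi_{J_e}^{(2-\phi(n))}(n)$, matching the stated right-hand side exactly. The main obstacle is verifying that every Taylor contribution with $k < m$ vanishes modulo $n$ (in the sense that the rational difference equals $n$ times a rational with denominator coprime to $n$). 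Each such term already carries an explicit factor $n^{m-k} \ge n$, and when $k$ is far from $m$ the accompanying rational $\binom{m}{k} B_k(a_d/e)\, d^{k-1}/(m\, e^{m-k})$ is easily seen to be $n$-integral after summation over $d$: the factor $1/e^{m-k}$ is harmless because $(e,n) = 1$, and the power $d^{k-1}$ absorbs any Clausen--von Staudt prime $p \mid n$ appearing in $B_k(a_d/e)$ whenever $p \mid d$. The delicate regime is $k$ close to $m$, where $n^{m-k}$ is too small to compensate for a Bernoulli denominator term-by-term. In this range I would exploit the reflection identity once more to note that $B_k(a_d/e) = B_k(1/e)$ for even $k$ (independent of $d$), reducing the $d$-sum to factors of the form $\prod_{p \mid n}(1 - p^{k-1})$, and then invoke Kummer-type congruences together with a careful $p$-adic valuation count to close the argument. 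This technical step, in the spirit of the earlier work of Cai (2002) and Cai--Fu--Zhou (2007), is where the bulk of the difficulty lies.
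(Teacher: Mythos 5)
Your strategy is genuinely different from the paper's: you work globally modulo $n$, stripping the coprimality condition by M\"obius inversion and evaluating the resulting power sum with Faulhaber's formula, whereas the paper localizes at each prime power $p^l\,\|\,n$, evaluates $\sum_{r}(p^l-er)^{\phi(p^l)-2}$ via Lemma \ref{lem:1} (quoted from \cite{CFZ2007}, a congruence modulo $p^{3l-1}$), removes the remaining prime factors of $n$ by inclusion--exclusion, transfers from $B_{\phi(p^l)-1}$ to $B_{\phi(n)-1}$ using Sun's congruence (Lemma \ref{lem:3}), and finishes by combining the prime powers. Your main-term computation is correct: the identity $B_m(a_d/e)=-J_e(n)J_e(d)B_m(1/e)$ holds, and the $k=m$ Taylor coefficient does reproduce $-J_e(n)\,n^{\phi(n)-2}\phi_{J_e}^{(2-\phi(n))}(n)\,B_m(1/e)/m$ exactly.

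However, there is a genuine gap: the assertion that every Taylor term with $k<m$ vanishes modulo $n$ is precisely the hard content of the theorem, and you do not prove it --- you describe it as ``where the bulk of the difficulty lies'' and only indicate what you ``would'' do. Two concrete obstacles show this is not routine. First, the factor $1/m=1/(\phi(n)-1)$ cannot simply be inverted modulo $n$: one can have $\gcd(\phi(n)-1,n)>1$ (for instance $n=7\cdot 11\cdot 17$ gives $\phi(n)-1=959=7\cdot 137$), and after rewriting $\tfrac{1}{m}\binom{m}{k}=\tfrac{1}{k}\binom{m-1}{k-1}$ the denominators $k$ near $m$ may still share factors with $n$, so ``vanishes modulo $n$'' is not well defined term by term. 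Second, for indices $k$ with $(p-1)\mid k$ and $p\mid n$, von Staudt--Clausen gives $B_k(a_d/e)$ a $p$-adic valuation of $-1$, and when $m-k$ is small the explicit factor $n^{m-k}$ does not obviously dominate; one must either exhibit cancellation across the divisors $d$ or prove a uniform $p$-integrality statement for $\tfrac{1}{k}\bigl(e^kB_k(x/e)-B_k(x)\bigr)$ --- which is exactly what Sun's Lemma \ref{lem:3} supplies in the paper's argument. Until this step is carried out, your proposal establishes that the main term has the right shape, but not the congruence itself.
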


According to Lehmer \cite{Lehmer1938}, for $v$ odd, $B_{v}(\frac{1}{2})=0$ and $B_{v}(\frac{1}{4})=-\frac{vE_{v-1}}{4^{v}}$ where $E_{m}$ is the $m$th Euler number defined by the generating function
\begin{equation*}
\frac{1}{\cosh x}=\sum_{m=0}^{}\frac{E_{m}}{m!}\cdot x^{m}.
\end{equation*}
It follows
\begin{corollary}\label{cor:1}
For $n$ integer and (n,6)=1,
\begin{equation}\label{eq:cor:1}
\sum^{\frac{n-1}{2}}_{\substack{r=1\\ (r,n)=1}}\frac{1}{r^{2}}\equiv 0 \pmod n.
\end{equation}
\end{corollary}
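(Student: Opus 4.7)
The plan is to obtain Corollary~\ref{cor:1} as an immediate specialization of Theorem~\ref{th:1} at $e=2$, combined with Lehmer's vanishing identity $B_{v}(1/2)=0$ for odd positive $v$, which is recorded in the paragraph between the theorem and the corollary.

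First I would match the left-hand side of \eqref{eq:th:1} at $e=2$ with the sum in \eqref{eq:cor:1}. Since $(n,6)=1$ forces $n$ to be odd, $\lfloor n/2\rfloor=(n-1)/2$, and the Jacobi-type symbol $J_{2}(n)=\left(\frac{n}{2}\right)$ equals $1$ because $n\equiv 1\pmod{2}$. The summation range and the summand then agree verbatim with those in \eqref{eq:cor:1}.

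Second I would dispose of the right-hand side. For any $n>1$ with $(n,6)=1$ one has $\phi(n)\ge 4$ and $\phi(n)$ even, so $\phi(n)-1\ge 3$ is an odd positive integer. Lehmer's identity then yields $B_{\phi(n)-1}(1/2)=0$, which collapses the entire right-hand side of \eqref{eq:th:1} to $0$, giving \eqref{eq:cor:1}. The boundary case $n=1$ is the trivial empty sum.

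There is no genuine obstacle; the statement is essentially a one-line consequence of Theorem~\ref{th:1}. The only mildly delicate point worth pointing out in the write-up is that the factor $1/(\phi(n)-1)$ in \eqref{eq:th:1} requires no modular interpretation: since $B_{\phi(n)-1}(1/2)$ vanishes in $\mathbb{Q}$ and $\phi(n)-1$ is a nonzero integer, the rational number on the right-hand side is literally $0$ prior to any reduction modulo $n$, so no hypothesis on $\gcd(\phi(n)-1,n)$ is needed.
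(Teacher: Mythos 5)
Your proposal is correct and follows exactly the route the paper intends: Corollary~\ref{cor:1} is obtained by specializing Theorem~\ref{th:1} at $e=2$ and invoking Lehmer's identity $B_{v}(1/2)=0$ for odd $v$ (here $v=\phi(n)-1$), which annihilates the right-hand side of \eqref{eq:th:1}. Your additional remarks on $J_{2}(n)=1$, the case $n=1$, and the harmlessness of the factor $1/(\phi(n)-1)$ are sound and slightly more careful than the paper's one-line derivation.
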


\begin{corollary}\label{cor:2}
For $n$ integer and (n,6)=1,
\begin{equation}\label{eq:cor:2}
\sum^{\lfloor n/4\rfloor}_{\substack{r=1\\ (r,n)=1}}\frac{1}{r^{2}}\equiv(-1)^{\frac{n-1}{2}}4n^{\phi(n)-2}\phi_{J_{4}}^{(2-\phi(n))}(n)E_{\phi(n)-2}\pmod{n}
\end{equation}
\end{corollary}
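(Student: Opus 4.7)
The plan is to obtain Corollary~\ref{cor:2} as a direct specialization of Theorem~\ref{th:1} at $e=4$, combined with Lehmer's evaluation of $B_{v}(1/4)$ for odd $v$ recorded immediately above the corollary. I do not anticipate a substantive obstacle: the argument is essentially a substitute-and-simplify calculation, and the only care needed is to verify the parity condition on $\phi(n)-1$ and to justify inverting a power of $4$ modulo $n$.

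First I would set $e=4$ in \eqref{eq:th:1} to get
\[
\sum_{\substack{r=1\\(r,n)=1}}^{\lfloor n/4\rfloor}\frac{1}{r^{2}}\equiv -J_{4}(n)\,n^{\phi(n)-2}\,\phi_{J_{4}}^{(2-\phi(n))}(n)\,\frac{B_{\phi(n)-1}(1/4)}{\phi(n)-1}\pmod{n}.
\]
Since $(n,6)=1$ and $n>1$, the integer $n$ is odd with at least one odd prime factor, so $\phi(n)$ is even; hence $v:=\phi(n)-1$ is odd, and Lehmer's identity $B_{v}(1/4)=-vE_{v-1}/4^{v}$ applies to give
\[
\frac{B_{\phi(n)-1}(1/4)}{\phi(n)-1}=-\frac{E_{\phi(n)-2}}{4^{\phi(n)-1}}.
\]

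Next, since $(4,n)=1$, Euler's theorem yields $4^{\phi(n)}\equiv 1\pmod{n}$, so $4^{\phi(n)-1}\equiv 4^{-1}\pmod{n}$; equivalently $1/4^{\phi(n)-1}$ can be replaced by $4$ inside the congruence. The two minus signs then cancel, leaving
\[
J_{4}(n)\,n^{\phi(n)-2}\,\phi_{J_{4}}^{(2-\phi(n))}(n)\cdot 4E_{\phi(n)-2}\pmod{n}.
\]
Finally I would identify $J_{4}(n)=(-1)^{(n-1)/2}$: if $n\equiv 1\pmod{4}$ both sides equal $1$, and if $n\equiv 3\pmod{4}$ both sides equal $-1$. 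This matches \eqref{eq:cor:2} and completes the derivation. (For $n=1$ the corollary is vacuous since the sum is empty and $E_{-1}$ would be undefined, so the implicit assumption is $n>1$.)
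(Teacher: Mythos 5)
Your proposal is correct and coincides with the paper's own (implicit) derivation: the paper states Corollary \ref{cor:2} as an immediate consequence of Theorem \ref{th:1} with $e=4$ together with Lehmer's identity $B_{v}(1/4)=-vE_{v-1}/4^{v}$ for odd $v$, which is exactly the substitute-and-simplify computation you carry out. Your added checks (parity of $\phi(n)-1$, inverting $4^{\phi(n)-1}$ via Euler's theorem, and identifying $J_{4}(n)=(-1)^{(n-1)/2}$) are precisely the routine verifications the paper leaves to the reader.
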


\begin{corollary}\label{cor:3}
For $n$ a positive integer, we have
\begin{enumerate}[{\rm (1)}]
\item if $(n,6)=1$,
\begin{equation}\label{eq:cor:3.1}
\sum_{\substack{r=1\\ (r,n)=1}}^{\lfloor n/3\rfloor}\frac{1}{r}\equiv -\frac{3}{2}q_3(n)+\frac{3}{4}nq_3^2(n)+\frac{1}{3}J_{3}(n)n^{\phi(n)-1}\phi_{J_{3}}^{(2-\phi(n))}(n)\frac{B_{\phi(n)-1}(\frac{1}{3})}{\phi(n)-1} \pmod{n^2};
\end{equation}
\item if $(n,6)=1$,
\begin{equation}\label{eq:cor:3.2}
\sum_{\substack{r=1\\ (r,n)=1}}^{\lfloor n/4\rfloor}\frac{1}{r}\equiv -3q_2(n)+\frac{3}{2}nq_2^2(n)+(-1)^{\frac{n+1}{2}}n^{\phi(n)-1}\phi_{J_{4}}^{(2-\phi(n))}(n)E_{\phi(n)-2} \pmod{n^2};
\end{equation}
\item if $(n,30)=1$,
\newpage
\begin{align}\label{eq:cor:3.3}
\sum_{\substack{r=1\\ (r,n)=1}}^{\lfloor n/6\rfloor}\frac{1}{r}\equiv -2q_2(n)-\frac{3}{2}q_3(n)+nq_2^2(n)+\frac{3}{4}nq_3^2(n)+\frac{1}{6}J_{6}(n)n^{\phi(n)-1}\phi_{J_{6}}^{(2-\phi(n))}(n)\frac{B_{\phi(n)-1}(\frac{1}{6})}{\phi(n)-1}\notag\\ \pmod{n^2}.
\end{align}
\end{enumerate}
\end{corollary}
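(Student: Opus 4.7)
My plan is to prove the three parts of Corollary \ref{cor:3} in parallel by adapting the classical Morley--Lehmer derivation (for primes) to the composite-modulus setting and then applying Theorem \ref{th:1} in the final step. The first observation is that, denoting $S_{j}^{(e)}:=\sum_{r\le n/e,(r,n)=1}1/r^{j}$ and using Theorem \ref{th:1} (or Corollary \ref{cor:2} when $e=4$) to rewrite $S_{2}^{(e)}$ modulo $n$ as the relevant Bernoulli-polynomial (resp.\ Euler-number) expression, the last term in each of \eqref{eq:cor:3.1}, \eqref{eq:cor:3.2}, \eqref{eq:cor:3.3} is exactly $-\tfrac{n}{e}\,S_{2}^{(e)}$ interpreted modulo $n^{2}$ ($e$ being invertible mod $n^{2}$ since $(n,e)=1$). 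The corollary therefore reduces to the intermediate congruence
\begin{equation*}
S_{1}^{(e)}\equiv\Lambda_{e}(n)-\frac{n}{e}\,S_{2}^{(e)}\pmod{n^{2}},
\end{equation*}
where $\Lambda_{e}(n)$ is the Lehmer Fermat-quotient expression: $-\tfrac{3}{2}q_{3}(n)+\tfrac{3n}{4}q_{3}(n)^{2}$ for $e=3$, $-3q_{2}(n)+\tfrac{3n}{2}q_{2}(n)^{2}$ for $e=4$, and $-2q_{2}(n)-\tfrac{3}{2}q_{3}(n)+nq_{2}(n)^{2}+\tfrac{3n}{4}q_{3}(n)^{2}$ for $e=6$.

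To establish the intermediate congruence I would begin with the product expansion
\begin{equation*}
\prod_{r=1}^{\lfloor n/e\rfloor}\!\Bigl(1-\frac{n}{r}\Bigr)\equiv 1-nH_{e}+\frac{n^{2}}{2}\bigl(H_{e}^{2}-K_{e}\bigr)\pmod{n^{3}},
\end{equation*}
where $H_{e}=\sum_{r\le n/e}1/r$ and $K_{e}=\sum_{r\le n/e}1/r^{2}$ are unrestricted harmonic sums and the left side equals $(-1)^{\lfloor n/e\rfloor}\binom{n-1}{\lfloor n/e\rfloor}$. Applying this expansion at each divisor $d\mid n$ in the M\"obius product $\prod_{d\mid n}\bigl[(-1)^{\lfloor d/e\rfloor}\binom{d-1}{\lfloor d/e\rfloor}\bigr]^{\mu(n/d)}$ and invoking the M\"obius identity $\sum_{r\le N,(r,n)=1}f(r)=\sum_{d\mid n}\mu(d)\sum_{r'\le N/d}f(dr')$ (for $f=1/r$ and $f=1/r^{2}$) to collapse unrestricted harmonic sums into coprime ones, one obtains
\begin{equation*}
\prod_{d\mid n}\Bigl[(-1)^{\lfloor d/e\rfloor}\binom{d-1}{\lfloor d/e\rfloor}\Bigr]^{\mu(n/d)}\equiv 1-nS_{1}^{(e)}+\frac{n^{2}}{2}\bigl((S_{1}^{(e)})^{2}-S_{2}^{(e)}\bigr)\pmod{n^{3}}.
\end{equation*}

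The left-hand side is then evaluated by the composite-modulus Morley-type congruences extending Cai \cite{Cai2002} and Cai et al.\ \cite{CFZ2007}; after careful expansion, the closed form takes the shape $1+nB_{e}+n^{2}D_{e}-\beta_{e}n^{2}S_{2}^{(e)}+O(n^{3})$, where $B_{e},D_{e}$ are polynomials in the Fermat quotients $q_{2}(n),q_{3}(n)$ and $\beta_{e}=\tfrac{1}{2}-\tfrac{1}{e}$. Matching the two sides, dividing by $n$, and using the leading-order congruence $S_{1}^{(e)}\equiv -B_{e}\pmod n$ to simplify $(S_{1}^{(e)})^{2}\pmod n$, the two contributions to the coefficient of $nS_{2}^{(e)}$ in $S_{1}^{(e)}$---namely $+\beta_{e}$ from the closed form and $-\tfrac{1}{2}$ from the M\"obius-product expansion---combine to $\beta_{e}-\tfrac{1}{2}=-\tfrac{1}{e}$, producing the intermediate congruence. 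A final substitution of $S_{2}^{(e)}\pmod n$ via Theorem \ref{th:1} / Corollary \ref{cor:2} into $-\tfrac{n}{e}S_{2}^{(e)}$ then yields each of \eqref{eq:cor:3.1}, \eqref{eq:cor:3.2}, \eqref{eq:cor:3.3}.

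The main obstacle is the two-layer M\"obius-inversion bookkeeping carried out to accuracy $n^{3}$: one must carefully track contributions over all divisors $d\mid n$, both in the conversion from $H_{e},K_{e}$ to $S_{1}^{(e)},S_{2}^{(e)}$ and in the expansion of the closed form to identify the specific Bernoulli coefficient $\beta_{e}=\tfrac{1}{2}-\tfrac{1}{e}$. That this coefficient, combined with the $-\tfrac{n}{2}S_{2}^{(e)}$ arising from the M\"obius-product expansion, yields precisely $-\tfrac{n}{e}S_{2}^{(e)}$ is the key technical step underlying the clean form of Corollary \ref{cor:3}.
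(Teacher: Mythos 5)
Your reduction to the intermediate congruence $S_1^{(e)}\equiv\Lambda_e(n)-\frac{n}{e}S_2^{(e)}\pmod{n^2}$ targets exactly the right statement, and your identification of the last term of each of \eqref{eq:cor:3.1}--\eqref{eq:cor:3.3} with $-\frac{n}{e}S_2^{(e)}$ via Theorem \ref{th:1} and Corollary \ref{cor:2} is correct. However, the route you propose for the intermediate congruence has a genuine gap that borders on circularity. The crucial input is your ``closed form'' $1+nB_e+n^2D_e-\beta_e n^2S_2^{(e)}+O(n^3)$, with $\beta_e=\frac{1}{2}-\frac{1}{e}$, for the M\"obius product $\prod_{d\mid n}\binom{d-1}{\lfloor d/e\rfloor}^{\mu(n/d)}$. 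Within this paper that closed form is precisely Theorem \ref{th:3} at $k=1$ (note $A_e(n)\equiv-S_2^{(e)}\pmod{n}$ by Theorem \ref{th:1}, and the coefficient $k(\frac{k}{2}-\frac{1}{e})$ at $k=1$ is $\frac{1}{2}-\frac{1}{e}$), and Theorem \ref{th:3} is proved \emph{from} Corollary \ref{cor:3}. The only result imported from \cite{CFZ2007} is Lemma \ref{lem:5}, a congruence for $\sum 1/(n-er)$ modulo $n^2$ --- not a mod-$n^3$ evaluation of the binomial products with the $S_2^{(e)}$-term isolated. So the closed form you need is established nowhere independently, and saying it ``takes the shape'' you describe ``after careful expansion'' is precisely the missing proof.

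The paper's own argument is far more direct and never touches binomial coefficients: since $(n-er,n)=1$, one writes $\frac{1}{n-er}\equiv(n-er)^{\phi(n^2)-1}\pmod{n^2}$, expands to first order in $n$ to obtain $\sum\frac{1}{n-er}\equiv-\frac{1}{e}\sum\frac{1}{r}-\frac{n}{e^2}\sum\frac{1}{r^2}\pmod{n^2}$ (all sums over $r\le\lfloor n/e\rfloor$ with $(r,n)=1$), and solves for $\sum 1/r$; Lemma \ref{lem:5} then supplies $\sum 1/(n-er)$ --- this is the actual source of your $\Lambda_e(n)$ --- and Theorem \ref{th:1} supplies $\sum 1/r^2$ modulo $n$. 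To salvage your approach you would first have to prove the $k=1$ case of Theorem \ref{th:3} without appealing to Corollary \ref{cor:3}, which amounts to redoing essentially the same work; the paper's short Euler-theorem manipulation is the intended shortcut.
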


\begin{theorem}\label{th:2}
For any positive integer $k$ and odd $n>1$, it follows
\begin{equation}\label{eq:th:2}
\prod_{d\mid n}\binom{kd-1}{(d-1)/2}^{\mu(n/d)}\equiv (-1)^{\phi(n)/2}4^{k\phi(n)}
\begin{cases}
(\bmod{\ n^3}) & \text{if }3\nmid n,\\
(\bmod{\ n^3/3}) & \text{if }3\mid n.
\end{cases}
\end{equation}
\end{theorem}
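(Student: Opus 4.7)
My plan is to reduce Theorem \ref{th:2} to \eqref{eq:Cai1} (the case $k=1$, proved by Cai in \cite{Cai2002}) and then to close the gap using the sum-of-reciprocal congruences furnished by Theorem \ref{th:1}. First I would factor out the Cai piece via
$$\binom{kd-1}{(d-1)/2}=\binom{d-1}{(d-1)/2}\prod_{j=1}^{(d-1)/2}\frac{kd-j}{d-j}=\binom{d-1}{(d-1)/2}\cdot R_k(d),$$
where, after the substitution $i=d-j$,
$$R_k(d):=\prod_{i=(d+1)/2}^{d-1}\!\Bigl(1+\tfrac{(k-1)d}{i}\Bigr).$$
In view of \eqref{eq:Cai1}, Theorem \ref{th:2} is then equivalent to
$$\prod_{d\mid n}R_k(d)^{\mu(n/d)}\equiv 4^{(k-1)\phi(n)}\pmod{n^3}\qquad\bigl(\text{mod }\tfrac{n^3}{3}\text{ if }3\mid n\bigr).$$

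Next I would expand $\log R_k(d)$ as a polynomial in $d$ modulo $d^{3}$,
$$\log R_k(d)\equiv (k-1)d\,A(d)-\tfrac{1}{2}(k-1)^2 d^2 B(d)\pmod{d^3},$$
with $A(d):=\sum_{(d+1)/2\le i\le d-1}i^{-1}$ and $B(d):=\sum_{(d+1)/2\le i\le d-1}i^{-2}$. At a prime $d=p\ge 5$ the classical Lehmer congruence $A(p)\equiv 2q_2(p)-pq_2(p)^2\pmod{p^2}$ together with Wolstenholme's theorem $B(p)\equiv 0\pmod p$ yields the pointwise identity $R_k(p)\equiv (1+pq_2(p))^{2(k-1)}=4^{(k-1)(p-1)}\pmod{p^3}$. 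Since $2^{\phi(n)}=1+nq_2(n)$, the right-hand side of the target expands modulo $n^3$ as
$$4^{(k-1)\phi(n)}=(1+nq_2(n))^{2(k-1)}\equiv 1+2(k-1)nq_2(n)+(k-1)(2k-3)n^2 q_2(n)^2,$$
and using $\phi(n)=\sum_{d\mid n}\mu(n/d)\,d$, the Möbius-weighted combination reduces the problem to the two auxiliary congruences
\begin{align*}
\sum_{d\mid n}\mu\bigl(\tfrac{n}{d}\bigr)\,d\,A(d)&\equiv 2\phi(n)q_2(n)\pmod{n^3},\\
\sum_{d\mid n}\mu\bigl(\tfrac{n}{d}\bigr)\,d^2\,B(d)&\equiv 0\pmod{n^3}.
\end{align*}
The second of these is delivered by Corollary \ref{cor:1} (the $e=2$ case of Theorem \ref{th:1}), once the reflection $1/(d-j)^2\equiv 1/j^2\pmod d$ is used to rewrite $B(d)$ as a half-range sum and Möbius inversion restricts the range to residues coprime to $n$; the first is the $1/r$-analogue, a generalized Lehmer congruence already available via the method of \cite{Cai2002,CFZ2007}.

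The principal obstacle will be the Möbius lifting just described. The pointwise expansion of $\log R_k(d)$ is only valid modulo $d^3$, and for a prime power $d=p^a$ with $a\ge 2$ this already loses ground once the $p$-adic valuations of the harmonic sums $A(d),B(d)$ are taken into account; the remaining precision up to $p^{3a}$ has to be recovered by cross-cancellation among divisors in the Möbius sum. This is exactly the role intended for the generalized Euler totient $\phi_{J_e}^{(2-\phi(n))}$ from \eqref{eq:Eulertot1}, which packages the inclusion-exclusion needed to pass from unrestricted harmonic sums to those over residues coprime to $n$. The prime $3$ must be handled apart, because the quadratic Bernoulli identity underlying Theorem \ref{th:1} loses one factor of $3$ at that prime, which accounts for the weaker modulus $n^3/3$ whenever $3\mid n$.
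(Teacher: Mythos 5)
Your overall strategy---M\"obius inversion combined with a second-order expansion controlled by Lehmer-type congruences for $\sum 1/r$ and $\sum 1/r^2$---is the right family of ideas, and factoring out Cai's $k=1$ case \eqref{eq:Cai1} is a legitimate reduction. But the decisive step, as you describe it, does not work, and you have located the hole without repairing it. You expand $\log R_k(d)$ only modulo $d^3$ and then feed these expansions into the M\"obius sum $\sum_{d\mid n}\mu(n/d)\log R_k(d)$, hoping to conclude something modulo $n^3$; for a divisor $d$ much smaller than $n$ this loses almost all of the required precision, and the ``cross-cancellation among divisors'' you invoke is not supplied by anything in the paper. In particular $\phi_{J_e}^{(2-\phi(n))}$ plays no such role: it belongs to Theorem \ref{th:1}, which requires $(n,6)=1$ and so cannot even be quoted when $3\mid n$; the weaker modulus $n^3/3$ actually originates in Lemma \ref{lem:2}, not in the Bernoulli identity. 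A second, independent defect: your $B(d)=\sum_{(d+1)/2\le i\le d-1}i^{-2}$ runs over all $i$ in the range, including those sharing a factor with $d$, so for composite $d$ the quantity $d^2B(d)$ fails to be a $p$-adic integer at primes $p\mid d$ and the asserted congruence $\sum_{d\mid n}\mu(n/d)\,d^2B(d)\equiv 0\pmod{n^3}$ is not even well posed; likewise your first auxiliary congruence does not match the actual expansion of $4^{(k-1)\phi(n)}=(1+nq_2(n))^{2(k-1)}$, whose linear term is $2(k-1)nq_2(n)$, not $2(k-1)\phi(n)q_2(n)$.

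The paper avoids all of this by performing the M\"obius inversion on an exact product identity rather than on congruences: grouping the factors of $\binom{kn-1}{(n-1)/2}=\prod_{r=1}^{(n-1)/2}\frac{kn-r}{r}$ according to $\gcd(r,n)$ yields the identity $\prod_{d\mid n}\binom{kd-1}{(d-1)/2}^{\mu(n/d)}=\prod_{r\le(n-1)/2,\,(r,n)=1}\frac{kn-r}{r}$, and only this last product---all of whose factors are units modulo $n$---is expanded to second order, after which Lemma \ref{lem:4} and Lemma \ref{lem:2} (via the reflection $r\mapsto n-r$ for the $\sum 1/r^2$ term) finish the computation, with the factor $3$ in the modulus coming from Lemma \ref{lem:2}. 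Your reduction can be salvaged in exactly the same way: applying the identical $\gcd$-grouping to $R_k(n)=\prod_{n/2<i<n}\bigl(1+\frac{(k-1)n}{i}\bigr)$ gives $\prod_{d\mid n}R_k(d)^{\mu(n/d)}=\prod_{(i,n)=1,\ n/2<i<n}\bigl(1+\frac{(k-1)n}{i}\bigr)$ exactly, which is the mirror image under $i\mapsto n-i$ of the product evaluated in \eqref{eq:4.3} of the proof of Theorem \ref{th:4} and equals $4^{(k-1)\phi(n)}$ modulo $n^3$ by the same appeal to Lemmas \ref{lem:2} and \ref{lem:4}. As written, however, your proposal is missing this structural step, so it does not constitute a proof.
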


\begin{remarks*}
(1). One immediately sees that \eqref{eq:th:2} gives a generalization of Morley's congruence \eqref{eq:Morley} if we let $n$ be an odd prime $p\ge 5$.
\begin{corollary}\label{cor:4}
Let $p\ge 5$ be an odd prime. For any positive integer $k$, it follows
\begin{equation}\label{eq:cor:4}
(-1)^{(p-1)/2}\binom{kp-1}{(p-1)/2}\equiv 4^{k(p-1)} \pmod{p^3}.
\end{equation}
\end{corollary}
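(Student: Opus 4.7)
The plan is to derive Corollary \ref{cor:4} as an immediate specialization of Theorem \ref{th:2} by taking $n=p$ a prime with $p\ge 5$. First I would observe that the hypotheses of Theorem \ref{th:2} are satisfied: $p$ is an odd integer greater than $1$, and since $p\ge 5$ we automatically have $3\nmid p$, so the relevant modulus from \eqref{eq:th:2} is $p^3$.

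Next I would simplify the product on the left-hand side of \eqref{eq:th:2}. Since $p$ is prime, its only divisors are $d=1$ and $d=p$, with $\mu(p/1)=\mu(p)=-1$ and $\mu(p/p)=\mu(1)=1$. Hence
\begin{equation*}
\prod_{d\mid p}\binom{kd-1}{(d-1)/2}^{\mu(p/d)}=\binom{kp-1}{(p-1)/2}^{1}\cdot\binom{k-1}{0}^{-1}=\binom{kp-1}{(p-1)/2},
\end{equation*}
because $\binom{k-1}{0}=1$ for every positive integer $k$. On the right-hand side, $\phi(p)=p-1$, so $(-1)^{\phi(p)/2}=(-1)^{(p-1)/2}$ and $4^{k\phi(p)}=4^{k(p-1)}$.

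Putting these simplifications together, Theorem \ref{th:2} specialized at $n=p$ yields precisely
\begin{equation*}
\binom{kp-1}{(p-1)/2}\equiv (-1)^{(p-1)/2}\,4^{k(p-1)}\pmod{p^3},
\end{equation*}
and multiplying both sides by $(-1)^{(p-1)/2}$ gives the statement of Corollary \ref{cor:4}. There is no real obstacle here: the only thing to verify is the bookkeeping of the M\"obius-indexed product over divisors of $p$ and the condition $3\nmid p$, both of which are immediate. All of the genuine difficulty has been absorbed into the proof of Theorem \ref{th:2}.
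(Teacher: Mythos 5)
Your proposal is correct and matches the paper's intent exactly: the paper treats Corollary \ref{cor:4} as an immediate specialization of Theorem \ref{th:2} at $n=p$, and your verification of the M\"obius bookkeeping (only $d=1$ and $d=p$ contribute, with $\binom{k-1}{0}=1$) together with $\phi(p)=p-1$ and $3\nmid p$ is precisely the routine check involved. Nothing further is needed.
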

\noindent (2). If we let $n$ be the product of two distinct odd primes, we obtain the following result resembling the quadratic reciprocity law, which extends Cai's Corollary 4 in \cite{Cai2002}.
\begin{corollary}\label{cor:5}
Let $p$, $q$ be two distinct odd primes. For any positive integer $k$, it follows
\begin{equation}\label{eq:cor:5}
\binom{kpq-1}{(pq-1)/2}\equiv 4^{k(p-1)(q-1)}\binom{kp-1}{(p-1)/2}\binom{kq-1}{(q-1)/2} \pmod{p^3q^3}.
\end{equation}
\end{corollary}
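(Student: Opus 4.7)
The plan is to specialize Theorem~\ref{th:2} to $n=pq$, which is odd and greater than $1$ since $p$ and $q$ are distinct odd primes. The divisors of $pq$ are $1$, $p$, $q$, $pq$, with M\"obius values $\mu(pq/1)=1$, $\mu(pq/p)=\mu(pq/q)=-1$, and $\mu(pq/pq)=1$. Using $\binom{k-1}{0}=1$, the product on the left-hand side of \eqref{eq:th:2} collapses to
\[
\frac{\binom{kpq-1}{(pq-1)/2}}{\binom{kp-1}{(p-1)/2}\binom{kq-1}{(q-1)/2}}.
\]
On the right-hand side, $\phi(pq)=(p-1)(q-1)$ satisfies $\phi(pq)/2 = 2\cdot\tfrac{p-1}{2}\cdot\tfrac{q-1}{2}$, which is an even integer, so $(-1)^{\phi(pq)/2}=1$. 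Theorem~\ref{th:2} therefore produces
\[
\frac{\binom{kpq-1}{(pq-1)/2}}{\binom{kp-1}{(p-1)/2}\binom{kq-1}{(q-1)/2}} \equiv 4^{k(p-1)(q-1)} \pmod{M},
\]
where $M=p^3q^3$ if $3\nmid pq$ and $M=p^3q^3/3$ otherwise.

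To convert this rational congruence into the integer congruence \eqref{eq:cor:5}, I set $A=\binom{kpq-1}{(pq-1)/2}$, $B=\binom{kp-1}{(p-1)/2}\binom{kq-1}{(q-1)/2}$, and $C=4^{k(p-1)(q-1)}$, all of which are positive integers. The rational relation $A/B\equiv C\pmod{M}$ is equivalent to the $\ell$-adic bound $v_\ell(A-BC)-v_\ell(B)\ge v_\ell(M)$ at every prime $\ell\mid pq$, and since $v_\ell(B)\ge 0$ for the integer $B$, this automatically upgrades to $v_\ell(A-BC)\ge v_\ell(M)$, i.e., to the integer congruence $A\equiv BC\pmod{M}$; no coprimality hypothesis on $B$ is required. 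When $3\nmid pq$, this is already the desired assertion \eqref{eq:cor:5}.

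The main obstacle is the residual case $3\mid pq$, say $p=3$, in which Theorem~\ref{th:2} only produces $M=p^3q^3/3=9q^3$ rather than the full $27q^3$. To recover the missing factor of $3$, my plan is to split the target via the Chinese Remainder Theorem into a congruence modulo $q^3$ (already delivered by the above, since $3\nmid q^3$) and one modulo $27$. The $27$-part, which does not come for free from Theorem~\ref{th:2}, would then be handled by a separate direct $3$-adic analysis of $\binom{3kq-1}{(3q-1)/2}$, leveraging the elementary $p=3$ factor $\binom{3k-1}{1}=3k-1$ together with the Morley-type congruence of Corollary~\ref{cor:4} at the prime $q\ge 5$.
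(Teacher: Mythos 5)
Your reduction of the main case to Theorem~\ref{th:2} with $n=pq$ is correct and is exactly what the paper does: the M\"obius product over the divisors $1,p,q,pq$ collapses to the displayed quotient, $(-1)^{\phi(pq)/2}=1$ because $\phi(pq)/2=2\cdot\frac{p-1}{2}\cdot\frac{q-1}{2}$ is even, and the upgrade from a congruence of fractions to a congruence of integers is unproblematic. You have also correctly located the one nontrivial issue, namely that when $3\mid pq$ Theorem~\ref{th:2} only delivers the modulus $p^3q^3/3$.

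However, for that residual case your proposal stops at a plan rather than a proof, and the plan as stated does not point at the right tools. The CRT split into a congruence modulo $q^3$ and one modulo $27$ is fine, but the modulo-$27$ part is precisely where all the work lies, and invoking Corollary~\ref{cor:4} at the prime $q$ cannot help there: Morley's congruence at $q$ gives information modulo $q^3$, not modulo powers of $3$. What the paper actually does is reopen the proof of Theorem~\ref{th:2} to keep the quadratic error term explicitly, obtaining
\begin{equation*}
\binom{3kq-1}{(3q-1)/2}\equiv \Bigl\{4^{2k(q-1)}+\tfrac{9k^2q^2}{4}\sum_{\substack{r=1\\ (r,3q)=1}}^{3q-1}\tfrac{1}{r^2}\Bigr\}\binom{3k-1}{1}\binom{kq-1}{(q-1)/2} \pmod{27q^3},
\end{equation*}
then evaluates $\sum_{(r,3q)=1}r^{-2}$ modulo $3q$ (it vanishes mod $q$ by Lemma~\ref{lem:2} and is $\equiv 2q-2$ mod $3$ by Fermat), which kills the error term outright when $q\equiv 1\pmod 6$; in the remaining case $q\equiv 5\pmod 6$ it still needs the extra fact, proved via Legendre's formula for $\mathrm{ord}_3(n!)$, that $3\mid k$ or $3\mid\binom{kq-1}{(q-1)/2}$. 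None of this appears in your proposal, so the case $3\mid pq$ --- the only case the corollary does not inherit directly from Theorem~\ref{th:2} --- remains unproved.
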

\end{remarks*}

In order to make the the following theorem briefly, we set
\begin{equation*}
A_{e}(n):=J_{e}(n)n^{\phi(n)-2}\phi^{(2-\phi(n))}_{J_{e}}(n)\frac{B_{\phi(n)-1}(\frac{1}{e})}{\phi(n)-1}.
\end{equation*}
Then, we have
\begin{theorem}\label{th:3}
For any positive integer $k$ and odd $n>1$, it follows
\begin{enumerate}[{\rm (1)}]
\item if $(3,n)=1$,
\begin{equation}\label{eq:th:3.1}
\prod_{d\mid n}\binom{kd-1}{\lfloor d/3\rfloor}^{\mu(n/d)}\equiv (-1)^{\phi_3(n)}\left\{\frac{1}{2}(27^{k\phi(n)}+1)+k(\frac{1}{2}k-\frac{1}{3})n^{2}A_{3}(n)\right\} \pmod{n^3};
\end{equation}
\item if $(3,n)=1$,
\begin{align}\label{eq:th:3.2}
\prod_{d\mid n}\binom{kd-1}{\lfloor d/4\rfloor}^{\mu(n/d)}\equiv (-1)^{\phi_4(n)}\left\{8^{k\phi(n)}+(-1)^{\frac{n+1}{2}}k(2k-1)n^{\phi(n)}\phi_{J_{4}}^{(2-\phi(n))}(n)E_{\phi(n)-2}\right\} \notag\\
\pmod{n^3};
\end{align}
\item if $(15,n)=1$,
\begin{equation}\label{eq:th:3.3}
\prod_{d\mid n}\binom{kd-1}{\lfloor d/6\rfloor}^{\mu(n/d)}\equiv (-1)^{\phi_6(n)}\left\{\frac{1}{2}(16^{k\phi(n)}+27^{k\phi(n)})+\frac{1}{2}k(k-\frac{1}{3})n^{2}A_{6}(n)\right\} \pmod{n^3}.
\end{equation}
\end{enumerate}
Here
$$\phi_e(n):=\sum_{d\mid n}\mu\left(\frac{n}{d}\right)\left\lfloor\frac{d}{e}\right\rfloor$$
is the generalized Euler totient function defined in \cite{CFZ2007}.
\end{theorem}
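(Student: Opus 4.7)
The plan is to adapt the scheme of Theorem~\ref{th:2} (which itself generalises \cite{Cai2002} to $k\ge 1$) to the three levels $e\in\{3,4,6\}$, relying on the corresponding $k=1$ congruences of \cite{CFZ2007}. The starting point is the factorisation
$$\binom{kd-1}{\lfloor d/e\rfloor}=(-1)^{\lfloor d/e\rfloor}\prod_{j=1}^{\lfloor d/e\rfloor}\!\left(1-\frac{kd}{j}\right),$$
whose $\mu(n/d)$-weighted product over $d\mid n$ produces the prescribed global sign $(-1)^{\phi_{e}(n)}$ and reduces each part of the statement to an evaluation modulo $n^3$ of
$$P_{e}(n,k):=\prod_{d\mid n}\prod_{j=1}^{\lfloor d/e\rfloor}\left(1-\frac{kd}{j}\right)^{\!\mu(n/d)}.$$

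First I would split the inner index $j$ according to whether $(j,n)=1$. The non-coprime contributions telescope under the Möbius weighting in exactly the same fashion as in \cite{Cai2002, CFZ2007}, because the factor $k$ does not interact with divisibility of $j$ by primes of $n$; what remains after this cancellation is expressible through the Jordan-type quantity $\phi_{J_{e}}^{(2-\phi(n))}(n)$ appearing in the statement. For the coprime indices, the second-order expansion
$$\log\!\left(1-\frac{kd}{j}\right)\equiv -\frac{kd}{j}-\frac{k^{2}d^{2}}{2j^{2}}\pmod{n^3}$$
is legitimate since $d\mid n$ forces the discarded terms to carry a factor of $n^3$, and this reduces $\log P_{e}(n,k)$ to the linear combination
$$-kn\,S_{1}(e)\;-\;\tfrac12 k^{2}n^{2}\,S_{2}(e)$$
of the two Lehmer-type sums
$$S_{1}(e):=\!\!\sum_{\substack{1\le r\le\lfloor n/e\rfloor\\(r,n)=1}}\!\frac{1}{r},\qquad S_{2}(e):=\!\!\sum_{\substack{1\le r\le\lfloor n/e\rfloor\\(r,n)=1}}\!\frac{1}{r^{2}}.$$

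Next I would substitute Corollary~\ref{cor:3} for $S_{1}(e)$ and Theorem~\ref{th:1} for $S_{2}(e)$. The Fermat-quotient pieces of $S_{1}(e)$, inserted into $\exp(-kn\,S_{1}(e))$ and unfolded by $p^{k\phi(n)}=(1+nq_{p}(n))^{k}$, generate precisely the leading constants $\tfrac12(27^{k\phi(n)}+1)$, $8^{k\phi(n)}$, and $\tfrac12(16^{k\phi(n)}+27^{k\phi(n)})$ of \eqref{eq:th:3.1}--\eqref{eq:th:3.3}, while the Bernoulli/Jordan part of $S_{1}(e)$ combined with the $S_{2}(e)$-contribution yields the $n^{2}A_{e}(n)$-term together with its quadratic-in-$k$ multiplier. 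For $e=4$, Lehmer's evaluation $B_{v}(\tfrac14)=-vE_{v-1}/4^{v}$ (for odd $v$) finally converts $A_{4}(n)$ into the Euler-number expression of \eqref{eq:th:3.2}.

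The main obstacle is the careful bookkeeping of the two sources that contribute quadratically in $k$: the second-order term of $\exp(-kn\,S_{1}(e))$ produces $q_{p}^{2}(n)$ pieces whose $n^{2}$-parts must be combined with the $-\tfrac12 k^{2}n^{2}S_{2}(e)$ term to reproduce exactly the coefficients $k(\tfrac12 k-\tfrac13)$ (for $e=3$), $k(2k-1)$ (for $e=4$), and $\tfrac12 k(k-\tfrac13)$ (for $e=6$) of the statement. A secondary difficulty is verifying that the Möbius telescoping of the non-coprime indices still delivers a clean $\phi_{J_{e}}^{(2-\phi(n))}(n)$ after the substitution $d\mapsto kd$; this is the step at which the hypotheses $(n,6)=1$ in parts (1) and (2) and $(n,30)=1$ in part (3) are used, in order to guarantee the stated modulus $n^3$ rather than a proper divisor of it.
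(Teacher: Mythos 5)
Your plan is, up to presentation, the paper's own proof: your ``telescoping of the non-coprime indices'' is the M\"obius-inversion identity
$$\prod_{d\mid n}\binom{kd-1}{\lfloor d/e\rfloor}^{\mu(n/d)}=(-1)^{\phi_e(n)}\prod_{\substack{r=1\\ (r,n)=1}}^{\lfloor n/e\rfloor}\Bigl(1-\frac{kn}{r}\Bigr),$$
and the paper likewise expands the right-hand side to second order modulo $n^3$ and substitutes Theorem~\ref{th:1} for $S_2(e)$ and Corollary~\ref{cor:3} for $S_1(e)$. Two corrections, one minor and one substantive. The minor one: the non-coprime factors cancel identically under the M\"obius weighting and leave nothing behind; the quantity $\phi_{J_e}^{(2-\phi(n))}(n)$ and the hypotheses $(n,6)=1$, $(n,30)=1$ enter only through the auxiliary congruences (Theorem~\ref{th:1}, Corollary~\ref{cor:3}, Lemma~\ref{lem:5}), not through the telescoping step.

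The substantive one: the ``careful bookkeeping'' you defer is exactly the crux, and carrying it out does not produce the printed coefficients in parts (1) and (3). For $e=3$, combining $1-knS_1(3)+\frac{k^2n^2}{2}\bigl(S_1(3)^2-S_2(3)\bigr)$ with $S_1(3)\equiv-\frac32q_3(n)+\frac34nq_3^2(n)+\frac13 nA_3(n)\pmod{n^2}$ and $S_2(3)\equiv-A_3(n)\pmod{n}$ gives
$$1+\tfrac{3k}{2}nq_3(n)+\tfrac{3k(3k-2)}{8}\,n^2q_3^2(n)+k\bigl(\tfrac12k-\tfrac13\bigr)n^2A_3(n),$$
whereas $\tfrac12(27^{k\phi(n)}+1)\equiv 1+\tfrac{3k}{2}nq_3(n)+\tfrac{3k(3k-1)}{4}n^2q_3^2(n)\pmod{n^3}$; the discrepancy $\tfrac98k^2n^2q_3^2(n)$ does not vanish modulo $n^3$. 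Already at $n=5$, $k=1$ the left side of \eqref{eq:th:3.1} equals $\binom{4}{1}=4$ while the right side is $54\bmod 125$. The same arithmetic-mean-versus-product-of-powers slip affects part (3); part (2) does check out against the expansion of $8^{k\phi(n)}=(1+nq_2(n))^{3k}$. So if you follow your plan faithfully you will not arrive at \eqref{eq:th:3.1} and \eqref{eq:th:3.3} as printed --- this is a defect of the statement (and of the paper's final simplification), but it means your assertion that the bookkeeping ``reproduces exactly the coefficients of the statement'' cannot be sustained; the main terms have to be corrected before the argument closes.
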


Furthermore, we consider the following generalized binomial coefficient. For $x\in\mathbb{C}$, let
$$\binom{x}{n}=\frac{x(x-1)\cdots(x-n+1)}{n(n-1)\cdots 1},$$
if $n$ is a positive integer, and $\binom{x}{0}=1$. If we replace $kd-1$ in Theorem \ref{th:2} by $(kd-1)/2$, then we have a new congruence.

\begin{theorem}\label{th:4}
For positive integer $k$ and odd $n>1$, it follows
\begin{equation}\label{eq:th:4}
\prod_{d\mid n}\binom{(kd-1)/2}{(d-1)/2}^{\mu(n/d)}\equiv 2^{-(k-1)\phi(n)}
\begin{cases}
(\bmod{\ n^3}) & \text{if }3\nmid n,\\
(\bmod{\ n^3/3}) & \text{if }3\mid n.
\end{cases}
\end{equation}
\end{theorem}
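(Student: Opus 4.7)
The plan is to collapse the left-hand side of \eqref{eq:th:4} to a single finite product indexed by the integers in $[1,(n-1)/2]$ coprime to $n$, and then to evaluate this product modulo $n^3$ using the Lehmer-type congruences already developed in this paper and in \cite{Cai2002}. The crucial input is the factorization
\begin{equation*}
\binom{(kd-1)/2}{(d-1)/2} = \prod_{i=1}^{(d-1)/2}\frac{(k-1)d + 2i}{2i},
\end{equation*}
obtained by the substitution $j \mapsto (d-1)/2 - i$ in the numerator $\prod_{j=0}^{(d-3)/2}(kd-1-2j)$ of the generalized binomial coefficient. This identity is the Theorem \ref{th:4} analogue of the standard factorization used in the proof of Theorem \ref{th:2}, and the empty product at $k=1$ confirms the trivial base case $2^{0}=1$.

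Next, I would sort the factors above by $g := (i,d)$: writing $i = g i'$ with $(i',d/g) = 1$, the factor becomes $1 + (k-1)(d/g)/(2i')$, and since $d$ is odd one checks $\lfloor (d-1)/(2g)\rfloor = (d/g - 1)/2$, so that $i'$ runs precisely over $\{1 \leq i' \leq (d/g - 1)/2 : (i', d/g) = 1\}$. This yields the multiplicative decomposition
\begin{equation*}
\binom{(kd-1)/2}{(d-1)/2} = \prod_{e \mid d} f(e), \qquad f(e) := \prod_{\substack{i=1 \\ (i,e)=1}}^{(e-1)/2}\left(1 + \frac{(k-1)e}{2i}\right),
\end{equation*}
with $f(1) = 1$, and Möbius inversion over $n$ collapses the left-hand side of \eqref{eq:th:4} to $f(n)$. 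To evaluate $f(n)$ modulo $n^3$, expand the product: writing $H_t := \sum_{\substack{1 \leq i \leq (n-1)/2 \\ (i,n)=1}} 1/i^t$, the elementary symmetric terms of order $\geq 3$ are $O(n^3)$ with $n$-unit denominators (this is where oddness of $n$ is used to invert $2i$), so
\begin{equation*}
f(n) \equiv 1 + \tfrac{(k-1)n}{2}\,H_1 + \tfrac{(k-1)^2 n^2}{8}(H_1^2 - H_2) \pmod{n^3}.
\end{equation*}
Feeding in Corollary \ref{cor:1} (namely $H_2 \equiv 0 \pmod n$ when $(n,6) = 1$) together with the Lehmer-type congruence $H_1 \equiv -2 q_2(n) + n q_2(n)^2 \pmod{n^2}$ that drives Cai's proof of \eqref{eq:Cai1}, and using $2^{\phi(n)} = 1 + n q_2(n)$, this collapses to the mod-$n^3$ truncation of $(1 + nq_2(n))^{-(k-1)} = 2^{-(k-1)\phi(n)}$, which is precisely the right-hand side of \eqref{eq:th:4}.

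The main obstacle lies in this last bookkeeping step: $H_1$ must be tracked to precision $n^2$ and $H_2$ to precision $n$ simultaneously, with the two pieces conspiring to match the expansion of $(1+nq_2(n))^{-(k-1)}$ through order $n^2$ on the nose. The weakened modulus $n^3/3$ when $3\mid n$ reflects the same denominator-of-$3$ phenomenon already present in \eqref{eq:Cai1} and in Theorem \ref{th:2}: on the one hand the cubic term $(k-1)^3 n^3 H_3/24$ in the logarithmic expansion of $f(n)$ carries a $1/3$ that is not cleared when $3\mid n$, and on the other hand Corollary \ref{cor:1} itself relaxes to $H_2 \equiv 0 \pmod{n/3}$ in that regime, pushing the $n^2 H_2$ contribution into $n^3/3$ rather than $n^3$. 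Once these two $3$-adic sources are properly absorbed, the proof is a mechanical variant of the argument for Theorem \ref{th:2}, with the factorization above replacing the standard numerator decomposition used there.
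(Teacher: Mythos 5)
Your argument is correct, and it reaches \eqref{eq:th:4} by a genuinely different and somewhat cleaner route than the paper. The paper normalizes by $2^{(n-1)/2}$, writes $2^{(n-1)/2}\binom{(kn-1)/2}{(n-1)/2}=\prod_{r=1}^{(n-1)/2}\frac{kn-(2r-1)}{r}$, and then splits the odd-indexed numerator via $\prod_r\bigl(kn-(2r-1)\bigr)=\prod_r\frac{(kn-r)(kn-(n-r))}{kn-2r}$ into three sub-products, each evaluated separately modulo $n^3$ (its \eqref{eq:4.2}, \eqref{eq:4.3}, \eqref{eq:4.4}, the first recycled from the proof of Theorem \ref{th:2}), with the $2^{\phi(n)/2}$ from the normalization cancelled at the end via \eqref{eq:4.1}. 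Your factorization $\binom{(kd-1)/2}{(d-1)/2}=\prod_{i}\frac{(k-1)d+2i}{2i}$ keeps the factor $2$ in the denominator from the outset, so after gcd-sorting and M\"obius inversion you face a single product $\prod\bigl(1+\frac{(k-1)n}{2i}\bigr)$ over reduced residues, and the theorem reduces to exactly two inputs: Lemma \ref{lem:4} for your $H_1$ modulo $n^2$ and Lemma \ref{lem:2} for $H_2$ modulo $n$ (or $n/3$). The bookkeeping closes correctly: $1+\tfrac{(k-1)n}{2}H_1+\tfrac{(k-1)^2n^2}{8}(H_1^2-H_2)\equiv 1-(k-1)nq_2(n)+\tfrac{k(k-1)}{2}n^2q_2(n)^2\equiv(1+nq_2(n))^{-(k-1)}\pmod{n^3}$. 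What your approach buys is the elimination of the three-way splitting and of the congruences \eqref{eq:4.2} and \eqref{eq:4.4} entirely; in effect you compute directly the ``square root'' of the paper's \eqref{eq:4.3}. One small correction to your closing discussion: since your displayed expansion is by elementary symmetric functions rather than by logarithms, the third-order term is $\tfrac{(k-1)^3n^3}{8}e_3$ with no denominator $3$, so it vanishes modulo $n^3$ even when $3\mid n$; the sole genuine source of the weakened modulus $n^3/3$ is the relaxation of $H_2\equiv 0$ from modulus $n$ to modulus $n/3$, which you also correctly identify.
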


\section{Preliminaries}

To prove the theorems, we need the following lemmas.

\begin{lemma}[{\cite[Lemma 1]{CFZ2007}}]\label{lem:1}
If $p\geq5$ is a prime and $k\geq2$, $l$, $t$ are positive integers, and $s$ is the least positive residue of $p^{l}$ modulo $t$, then
\begin{equation}\label{eq:lem:1}
\sum_{r=1}^{\lfloor p^{l}/t\rfloor}(p^{l}-tr)^{2k}\equiv \frac{t^{2k}}{2k+1}\{\frac{2k+1}{t}p^{l}B_{2k}-B_{2k+1}(\frac{s}{t})\}\pmod {p^{3l-1}},
\end{equation}
where $B_{n}$ is the $n$th Bernoulli number.
\end{lemma}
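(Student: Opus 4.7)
The plan is to reduce the sum to a Bernoulli-polynomial difference via Faulhaber's formula and then truncate $p$-adically. Setting $q=\lfloor p^{l}/t\rfloor$ and $s=p^{l}-qt$, the substitution $j=q-r$ rewrites the sum as
\[
\sum_{r=1}^{\lfloor p^{l}/t\rfloor}(p^{l}-tr)^{2k} \;=\; \sum_{j=0}^{q-1}(s+tj)^{2k} \;=\; t^{2k}\sum_{j=0}^{q-1}\bigl(\tfrac{s}{t}+j\bigr)^{2k}.
\]
Applying the telescoping identity $(n+1)\sum_{j=0}^{q-1}(a+j)^{n} = B_{n+1}(a+q)-B_{n+1}(a)$, which follows from $B_{n+1}(x+1)-B_{n+1}(x)=(n+1)x^{n}$, with $a=s/t$, $n=2k$, and the simplification $a+q=p^{l}/t$, this becomes
\[
\frac{t^{2k}}{2k+1}\Bigl[B_{2k+1}\!\bigl(\tfrac{p^{l}}{t}\bigr) \,-\, B_{2k+1}\!\bigl(\tfrac{s}{t}\bigr)\Bigr].
\]

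The next step is to expand $B_{2k+1}(p^{l}/t) = \sum_{i=0}^{2k+1}\binom{2k+1}{i}B_{2k+1-i}(p^{l}/t)^{i}$ and retain only the terms that survive modulo $p^{3l-1}$. Since $k\ge 2$, both $B_{2k+1}$ and $B_{2k-1}$ vanish (odd-index Bernoulli numbers beyond $B_{1}$ are zero), eliminating the $i=0$ and $i=2$ contributions exactly. The $i=1$ term contributes $(2k+1)B_{2k}(p^{l}/t)$, and after multiplication by $t^{2k}/(2k+1)$ produces precisely the $t^{2k-1}p^{l}B_{2k}$ appearing in the claim. The $B_{2k+1}(s/t)$ subtraction is carried through unchanged, so what remains is to verify that every $i\ge 3$ term has $p$-adic valuation at least $3l-1$.

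The main technical obstacle is this tail estimate. The numerator of the $i$-th term carries $(p^{l})^{i}$, supplying at least $il\ge 3l$ factors of $p$, while $\binom{2k+1}{i}/(2k+1)$ and the $t^{i-1}$ appearing in the denominator are coprime to $p$ (as $p\ge 5$, and in the intended applications $p\nmid t$). The only genuine threat is the denominator of $B_{2k+1-i}$, but the von Staudt--Clausen theorem guarantees that it contains at most a single factor of $p$, costing at most one $p$ from the numerator. The surviving $p$-adic valuation is therefore at least $il-1\ge 3l-1$, which is exactly the required precision. Combining the $i=1$ contribution with the untouched $B_{2k+1}(s/t)$ term then yields the stated congruence.
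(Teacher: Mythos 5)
The paper does not prove this lemma; it is quoted verbatim from \cite[Lemma 1]{CFZ2007}, so there is no in-paper argument to compare against. Your reconstruction is the standard one (reindex to $\sum_{j=0}^{q-1}(s+tj)^{2k}$, apply Faulhaber in the form $\sum_{j=0}^{q-1}(a+j)^{2k}=\frac{1}{2k+1}\bigl(B_{2k+1}(a+q)-B_{2k+1}(a)\bigr)$, expand $B_{2k+1}(p^l/t)$ around $0$, and kill the tail $p$-adically), and it is essentially correct; the exact identity, the vanishing of the $i=0$ and $i=2$ terms via $B_{2k+1}=B_{2k-1}=0$ (this is where $k\ge 2$ enters), and the identification of the $i=1$ term with $t^{2k-1}p^lB_{2k}$ all check out.

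One claim in the tail estimate is not literally true as stated: $\binom{2k+1}{i}/(2k+1)=\binom{2k}{i-1}/i$ is \emph{not} always a $p$-adic unit, nor even $p$-integral --- it can acquire $p^{-v_p(i)}$ when $p\mid i$ (e.g.\ $k=2$, $i=5$, $p=5$ gives $\binom{5}{5}/5=1/5$). So the correct lower bound for the valuation of the $i$-th term is $li-v_p(i)+v_p(B_{2k+1-i})\ge li-v_p(i)-1$ rather than $li-1$. This does not sink the proof: $v_p(i)\ge 1$ forces $i\ge p\ge 5$, and then $v_p(i)\le\log_p i\le i-3\le l(i-3)$, so $li-v_p(i)-1\ge 3l-1$ still holds; for $3\le i<p$ one has $v_p(i)=0$ and the bound $li-1\ge 3l-1$ is as you wrote. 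You should also state up front that $p\nmid t$ (true in all applications, where $t=e\in\{2,3,4,6\}$ and $p\ge 5$), since otherwise $t^{2k-i}$ with $i=2k+1$ and the reduction $s=p^l-qt$ being the least positive residue both need care. With those two touch-ups the argument is complete.
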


\begin{lemma}[{\cite[Lemma 1]{Cai2002}}]\label{lem:2}
Let $n>1$ be an integer. Then
\begin{equation}\label{eq:lem:2}
\sum_{\substack{i=1\\ (i,n)=1}}^{n-1}\frac{1}{i^2}\equiv 0
\begin{cases}
(\bmod{\ n}) & \text{if }3\nmid n,\ n\ne 2^a,\\
(\bmod{\ n/3}) & \text{if }3\mid n,\\
(\bmod{\ n/2}) & \text{if }n=2^a.
\end{cases}
\end{equation}
\end{lemma}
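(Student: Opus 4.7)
The plan is to reduce the sum of inverse squares to a sum of squares via a symmetry, use M\"obius inversion to obtain a closed form, and then analyze divisibility by $2$ and $3$ in a short case split.

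Since $i\mapsto i^{-1}$ is a bijection on the units of $\mathbb{Z}/n\mathbb{Z}$, one has
\[
\sum_{\substack{i=1\\(i,n)=1}}^{n-1}\frac{1}{i^{2}}\;\equiv\;\sum_{\substack{i=1\\(i,n)=1}}^{n-1}i^{2}\pmod{n},
\]
so it suffices to establish the stated divisibilities for the integer $T(n):=\sum_{(i,n)=1,\,1\le i\le n-1}i^{2}$. Inserting the M\"obius identity $[(i,n)=1]=\sum_{d\mid\gcd(i,n)}\mu(d)$, swapping sums, and applying the Faulhaber formula $\sum_{j=1}^{m-1}j^{2}=(m-1)m(2m-1)/6$ together with the evaluations $\sum_{d\mid n}\mu(d)=0$, $\sum_{d\mid n}\mu(d)/d=\phi(n)/n$, and $\sum_{d\mid n}d\,\mu(d)=\prod_{p\mid n}(1-p)$ (all valid for $n>1$), one obtains the closed form
\[
T(n)\;=\;\frac{n}{6}\,B(n),\qquad B(n)\;:=\;2n\phi(n)+\prod_{p\mid n}(1-p).
\]
Consequently $n\mid T(n)$ iff $6\mid B(n)$, $(n/3)\mid T(n)$ iff $2\mid B(n)$, and $(n/2)\mid T(n)$ iff $3\mid B(n)$, so the proof reduces to checking these three divisibilities of $B(n)$.

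For parity, $2n\phi(n)$ is even while $\prod_{p\mid n}(1-p)$ is even iff $n$ has an odd prime factor; hence $2\mid B(n)$ unless $n$ is a power of $2$, which settles the case $3\mid n$. For divisibility of $B(n)$ by $3$ in the regime $3\nmid n$, I would split on whether some $p\mid n$ satisfies $p\equiv 1\pmod 3$ (so both $\phi(n)$ and $\prod_{p\mid n}(1-p)$ are divisible by $3$, giving $3\mid B(n)$ at once) or whether every $p\mid n$ satisfies $p\equiv -1\pmod 3$ (in which case a direct reduction modulo $3$ gives $2n\phi(n)\equiv 2(-1)^{\omega(n)}$ and $\prod_{p\mid n}(1-p)\equiv(-1)^{\omega(n)}$, where $\omega(n)$ denotes the number of distinct prime divisors, hence $B(n)\equiv 3(-1)^{\omega(n)}\equiv 0\pmod 3$). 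Finally, for $n=2^{a}$ one computes explicitly $B(2^{a})=4^{a}-1\equiv 0\pmod 3$. Combining these observations yields exactly the three moduli claimed in Lemma~\ref{lem:2}.

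I expect the main obstacle to be the sub-case analysis for divisibility of $B(n)$ by $3$ in the coprime-to-$6$ regime: it requires tracking the residues of prime divisors of $n$ modulo $3$ and a short parity argument in $\omega(n)$ when all such primes are $\equiv -1\pmod 3$. Once this is in hand, the reduction via the units bijection and the M\"obius-inversion evaluation of $T(n)$ are routine manipulations.
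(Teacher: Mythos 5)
Your argument is correct, and it is worth noting that the paper itself gives no proof of this statement: Lemma~\ref{lem:2} is simply imported from \cite[Lemma~1]{Cai2002}, so there is nothing internal to compare against. Your route is the standard one for such results and almost certainly the same as in the cited source: the inversion $i\mapsto i^{-1}$ on the units reduces the claim to the integer $T(n)=\sum_{(i,n)=1}i^{2}$, and the M\"obius/Faulhaber computation correctly yields $T(n)=\tfrac{n}{6}\bigl(2n\phi(n)+\prod_{p\mid n}(1-p)\bigr)$, after which the three stated moduli are exactly the conditions $6\mid B(n)$, $2\mid B(n)$, $3\mid B(n)$. I checked the delicate sub-case: when $3\nmid n$ and every prime divisor satisfies $p\equiv -1\pmod 3$, indeed $n\phi(n)=\prod_{p\mid n}p^{2a_p-1}(p-1)\equiv(-1)^{\omega(n)}\pmod 3$ and $\prod_{p\mid n}(1-p)\equiv 2^{\omega(n)}\equiv(-1)^{\omega(n)}\pmod 3$, so $B(n)\equiv 3(-1)^{\omega(n)}\equiv 0\pmod 3$; together with the parity analysis and the explicit evaluation $B(2^{a})=4^{a}-1$ this covers all cases. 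The proof is complete as written.
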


\begin{lemma}[{\cite[Corollary 1.3]{Sun2003}}]\label{lem:3}
Let $a\in\mathbb{Z}$, $k$, $q$, $m\in\mathbb{Z}^{+}$ and $(m,q)=1$. Then
\begin{equation}\label{eq:lem:3}
\frac{1}{k}(m^{k}B_{k}(\frac{x+a}{m})-B_{k}(x))\equiv \sum_{j=0}^{q-1}(\lfloor\frac{a+jm}{q}\rfloor+\frac{1-m}{2})(x+a+jm)^{k-1}\pmod q.
\end{equation}
\end{lemma}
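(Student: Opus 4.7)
My plan is to establish \eqref{eq:lem:3} as a polynomial congruence in $x$ over $\mathbb{Z}_{(q)}$, with both sides interpreted as polynomials of degree at most $k-1$ whose coefficients have denominators coprime to $q$. Let $F(x)$ denote the left-hand side and $G(x)$ the right-hand side. First, I would compare leading behaviours. Expanding $B_k(y) = y^k - (k/2)y^{k-1} + O(y^{k-2})$, the $x^k$ terms in $m^k B_k((x+a)/m) - B_k(x)$ cancel, and an elementary calculation shows that the leading coefficient of $F$ is $a + (1-m)/2$. For $G$, the leading coefficient equals $\sum_{j=0}^{q-1}c_j$ where $c_j := \lfloor(a+jm)/q\rfloor + (1-m)/2$; using the identity $\sum_{j=0}^{q-1}\lfloor(a+jm)/q\rfloor = a + (m-1)(q-1)/2$ (an immediate consequence of $(m,q)=1$, which ensures that the residues $\{(a+jm)\bmod q\}$ permute $\{0,\ldots,q-1\}$), this also equals $a + (1-m)/2$.

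Next, using the difference formula $B_k(y+1) - B_k(y) = ky^{k-1}$ iterated appropriately, I would compute
\[F(x+m) - F(x) = m(x+a)^{k-1} - \sum_{i=0}^{m-1}(x+i)^{k-1},\]
by applying it once to $B_k((x+a)/m)$ (where shifting $x$ by $m$ shifts the inner argument by $1$) and $m$ times to $B_k(x)$. A parallel computation of $G(x+m) - G(x)$, obtained by reindexing the defining sum via $j\mapsto j+1$ together with a careful accounting of the floor differences $c_{j-1}-c_j = \lfloor(a+(j-1)m)/q\rfloor - \lfloor(a+jm)/q\rfloor$, will yield
\[G(x+m) - G(x) \equiv m(x+a)^{k-1} - \sum_{i=0}^{m-1}(x+i)^{k-1} \pmod{q}.\]
The bijection $j \mapsto (a+jm)\bmod q$ on $\{0,\ldots,q-1\}$ is again the key tool, since it allows every relevant sum to be rewritten in terms of residues modulo $q$.

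Setting $P(x) := F(x) - G(x)$, the above shows that $P$ is a polynomial of degree at most $k-1$ with $q$-integral coefficients whose $m$-shift vanishes mod $q$ as a polynomial identity. Combined with the matching of leading terms from the first paragraph, a descending induction on degree (using $(m,q)=1$ to cancel the factor of $m$ appearing in the leading coefficient of $P(x+m) - P(x)$) reduces the claim to verifying $P(0) \equiv 0 \pmod{q}$. This last point is routine, for instance by evaluating $F(0)$ via the multiplication formula $B_k(a/m) = m^{1-k}B_k(a) - \sum_{j=1}^{m-1} B_k((a+j)/m)$ and comparing to $G(0)$ directly.

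The main obstacle will be the mod-$q$ analysis for $G(x+m) - G(x)$ in the second paragraph: correctly accounting for the ``carries'' that arise when $j\mapsto j+1$ pushes $a+jm$ across a multiple of $q$, and matching these carries to the power-sum expression on the $F$ side. A secondary delicate point is the induction whenever $q$ divides some intermediate degree $d \le k-1$; in those cases one may need to supplement with shifts by $2m, 3m, \ldots$ to force the descent through the problematic degrees.
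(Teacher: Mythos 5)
First, a point of comparison: the paper does not prove Lemma~\ref{lem:3} at all --- it is imported verbatim as \cite[Corollary 1.3]{Sun2003}, so there is no internal argument to measure your outline against; you are attempting a from-scratch proof of a cited result. Your first two steps do check out. The leading coefficient of both sides is exactly $a+\frac{1-m}{2}$ (your identity $\sum_{j=0}^{q-1}\lfloor (a+jm)/q\rfloor = a+\frac{(m-1)(q-1)}{2}$ is correct), the exact identity $F(x+m)-F(x)=m(x+a)^{k-1}-\sum_{i=0}^{m-1}(x+i)^{k-1}$ follows from $B_k(y+1)-B_k(y)=ky^{k-1}$ as you say, and the carry bookkeeping for $G(x+m)-G(x)$ via the bijection $j\mapsto (a+jm)\bmod q$ does close up: writing $r_j$ for that residue, the coefficient picked up by $(x+r)^{k-1}$ is $\lceil (m-r)/q\rceil$, which matches the count of $i\in\{0,\dots,m-1\}$ with $i\equiv r\pmod q$. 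So $F(x+m)-F(x)\equiv G(x+m)-G(x)\pmod q$ is genuinely available.

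The gaps are in your third paragraph, and they are substantive. (i) The descending induction cannot deliver a coefficientwise conclusion: for $q$ prime the polynomial $x^q-x$ satisfies $P(x+h)\equiv P(x)\pmod q$ coefficientwise for \emph{every} integer $h$ (freshman's dream plus Fermat), has $P(0)=0$, and can sit inside a polynomial of degree $\le k-1$ with vanishing top coefficient without itself vanishing; your proposed remedy of adding shifts by $2m,3m,\dots$ changes nothing, since the obstruction is invariant under all shifts. The correct repair is to abandon the coefficientwise formulation: because $(m,q)=1$, the points $0,m,2m,\dots,(q-1)m$ exhaust $\mathbb{Z}/q\mathbb{Z}$, so shift-invariance of $P=F-G$ reduces the congruence, as a statement about $q$-integral values of $x$ (the paper only ever uses $x=0$), to the single evaluation $P(0)\equiv 0\pmod q$ --- no leading-coefficient comparison and no degree descent are needed at all. (ii) That base case is precisely where you wave your hands. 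Showing $\frac{1}{k}\bigl(m^kB_k(\frac{a}{m})-B_k\bigr)\equiv \sum_{j=0}^{q-1}\bigl(\lfloor\frac{a+jm}{q}\rfloor+\frac{1-m}{2}\bigr)(a+jm)^{k-1}\pmod q$, including the $q$-integrality of the left side despite the von Staudt--Clausen denominators of $B_k$, is the actual content of Sun's theorem (the Lehmer- and Carlitz-type congruences are special cases); the multiplication theorem for Bernoulli polynomials will not produce it. One needs the exact power-sum identities $\sum_{n=0}^{N-1}(x+n)^{k-1}=\frac{1}{k}(B_k(x+N)-B_k(x))$ evaluated over a full period $N=qm$ and split by residues modulo $m$ and modulo $q$, which is in effect Sun's own proof. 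As written, your outline defers the entire difficulty to the one step you declare routine.
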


\begin{lemma}[{\cite[Theorem 1]{Cai2002}}]\label{lem:4}
For odd $n>1$, we have
\begin{equation}\label{eq:lem:4}
\sum_{\substack{i=1\\ (i,n)=1}}^{(n-1)/2} \frac{1}{i} \equiv -2q_2(n)+nq_2^2(n) \pmod{n^2},
\end{equation}
where $q_r(n)$ denotes the Euler quotient, i.e.,
$$q_r(n)=\frac{r^{\phi(n)}-1}{n},$$
with $(n,r)=1$.
\end{lemma}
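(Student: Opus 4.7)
The plan is to derive Lemma~\ref{lem:4} from Cai's Morley-type congruence \eqref{eq:Cai1} via a Möbius-inverted decomposition of the binomial coefficient $\binom{n-1}{(n-1)/2}$. Write $A(m) := \binom{m-1}{(m-1)/2}$ and
$$C(m) := \prod_{\substack{j=1\\(j,m)=1}}^{(m-1)/2} \frac{m-j}{j}.$$
The first step is to establish the multiplicative identity $A(n) = \prod_{d\mid n} C(d)$. For each $i \in \{1,\dots,(n-1)/2\}$, set $d := n/\gcd(i,n)$ and write $i = (n/d)\,i'$ with $(i',d) = 1$; then $(n-i)/i = (d-i')/i'$ and $i' \le (d-1)/2$ (since $n/d$ is odd, a direct check gives $\lfloor(n-1)d/(2n)\rfloor = (d-1)/2$). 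As $i$ sweeps $\{1,\dots,(n-1)/2\}$, the pair $(d,i')$ ranges over all $d\mid n$ and all $(i',d) = 1$ with $1 \le i' \le (d-1)/2$, yielding the decomposition. Möbius inversion then identifies $C(n)$ with the product on the left of \eqref{eq:Cai1}: $C(n) = \prod_{d\mid n} A(d)^{\mu(n/d)}$.

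Next, I would expand $C(n)$ modulo $n^3$ in two different ways. Directly,
$$C(n) = (-1)^{\phi(n)/2}\prod_{\substack{i=1\\(i,n)=1}}^{(n-1)/2}\left(1 - \frac{n}{i}\right) \equiv (-1)^{\phi(n)/2}\Bigl(1 - nS + \tfrac{n^2}{2}(S^2 - T_2)\Bigr)\pmod{n^3},$$
where $S$ is the sum appearing in Lemma~\ref{lem:4} and $T_2 := \sum_{(i,n)=1,\, i \le (n-1)/2} 1/i^2$. Invoking \eqref{eq:Cai1} together with $2^{\phi(n)} = 1 + nq_2(n)$,
$$C(n) \equiv (-1)^{\phi(n)/2}4^{\phi(n)} \equiv (-1)^{\phi(n)/2}\bigl(1 + 2nq_2(n) + n^2 q_2(n)^2\bigr)\pmod{n^3}.$$
Equating, cancelling the common sign, and dividing through by $n$ gives
$$S \equiv -2q_2(n) - nq_2(n)^2 + \tfrac{n}{2}(S^2 - T_2) \pmod{n^2}.$$

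The final step is a bootstrap. Reducing the last congruence modulo $n$ yields the shadow $S \equiv -2q_2(n) \pmod{n}$, whence $\tfrac{n}{2}S^2 \equiv 2n q_2(n)^2 \pmod{n^2}$. Lemma~\ref{lem:2} gives $T_2 \equiv 0 \pmod n$ and hence $\tfrac{n}{2}T_2 \equiv 0 \pmod{n^2}$. Substituting back, the two $n q_2(n)^2$ contributions combine with the correct sign, producing $S \equiv -2q_2(n) + nq_2(n)^2 \pmod{n^2}$, as required for $(n,6) = 1$.

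The chief obstacle is the case $3 \mid n$, where both \eqref{eq:Cai1} and Lemma~\ref{lem:2} degrade to moduli $n^3/3$ and $n/3$ respectively; the chain above then only recovers Lemma~\ref{lem:4} modulo $n^2/3$. Closing this $3$-adic gap requires a separate analysis — most cleanly by splitting $n = 3^a m$ with $(m,3) = 1$ via the Chinese Remainder Theorem and handling the $3^{2a}$-part directly through a Fermat-quotient expansion of $2^{\phi(n)}$ in powers of $q_3(2)$, combined with a sharpened treatment of $T_2$ modulo $3^a$ that exploits additional cancellation in $\sum 1/i^2$ beyond what Lemma~\ref{lem:2} provides.
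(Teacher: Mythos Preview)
The paper does not supply a proof of this lemma; it is quoted verbatim as \cite[Theorem~1]{Cai2002}. In that source the logical flow is the \emph{reverse} of yours: Cai first establishes \eqref{eq:lem:4} directly, and only afterwards deduces the composite Morley congruence \eqref{eq:Cai1} from it---by precisely the product decomposition $A(n)=\prod_{d\mid n}C(d)$ and the mod-$n^3$ expansion of $C(n)$ that you wrote down. (Compare the proof of Theorem~\ref{th:2} in the present paper, which is the same computation with the extra parameter $k$.) Your argument therefore runs the implication backwards: you assume \eqref{eq:Cai1} and recover \eqref{eq:lem:4}. As a demonstration that the two statements are equivalent modulo Lemma~\ref{lem:2} this is perfectly valid, but as an independent proof of Lemma~\ref{lem:4} it is circular, since the only proof of \eqref{eq:Cai1} in the literature already rests on Lemma~\ref{lem:4}.

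Two smaller points. First, Lemma~\ref{lem:2} concerns the full sum $\sum_{i\le n-1,\,(i,n)=1}1/i^2$, not your half-sum $T_2$; you need the symmetry $1/(n-r)^2\equiv 1/r^2\pmod n$ to pass from one to the other (the paper makes this step explicit in the proof of Theorem~\ref{th:2}). Second, you correctly identify that the $3\mid n$ case does not close: both \eqref{eq:Cai1} and Lemma~\ref{lem:2} lose a factor of $3$, and your bootstrap only reaches modulus $n^2/3$, whereas Lemma~\ref{lem:4} is asserted for \emph{all} odd $n>1$. The CRT-and-refine sketch you offer is plausible in outline but is not carried out; Cai's original argument in \cite{Cai2002} handles all odd $n$ uniformly and needs no such detour.
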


\begin{lemma}[{\cite[Theorem 1]{CFZ2007}}]\label{lem:5}
For odd $n>1$, we have
\begin{enumerate}[{\rm (1)}]
\item if $(3,n)=1$,
\begin{equation}\label{eq:lem:5.1}
\sum_{\substack{r=1\\ (r,n)=1}}^{\lfloor d/3\rfloor}\frac{1}{n-3r}\equiv \frac{1}{2}q_3(n)-\frac{1}{4}nq_3^2(n) \pmod{n^2};
\end{equation}
\item if $(3,n)=1$,
\begin{equation}\label{eq:lem:5.2}
\sum_{\substack{r=1\\ (r,n)=1}}^{\lfloor d/4\rfloor}\frac{1}{n-4r}\equiv \frac{3}{4}q_2(n)-\frac{3}{8}nq_2^2(n) \pmod{n^2};
\end{equation}
\item if $(15,n)=1$,
\begin{equation}\label{eq:lem:5.3}
\sum_{\substack{r=1\\ (r,n)=1}}^{\lfloor d/6\rfloor}\frac{1}{n-6r}\equiv \frac{1}{3}q_2(n)+\frac{1}{4}q_3(n)-\frac{1}{6}nq_2^2(n)-\frac{1}{8}nq_3^2(n) \pmod{n^2}.
\end{equation}
\end{enumerate}
\end{lemma}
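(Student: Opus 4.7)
The three parts of Lemma \ref{lem:5} share a common proof template; I sketch part (1), with parts (2) and (3) following by replacing mod $3$ with mod $4$ and mod $6$, respectively. The starting point is the algebraic identity
$$\frac{1}{n-3r}=-\frac{1}{3r}+\frac{n}{3r(n-3r)}\equiv -\frac{1}{3r}-\frac{n}{9r^{2}}\pmod{n^{2}},$$
which, upon summation, reduces the target sum to
$$\sum_{\substack{r=1\\(r,n)=1}}^{\lfloor n/3\rfloor}\frac{1}{n-3r}\equiv -\tfrac{1}{3}T-\tfrac{n}{9}U\pmod{n^{2}},$$
where $T=\sum_{r\le \lfloor n/3\rfloor,\,(r,n)=1}1/r$ is to be computed modulo $n^{2}$ and $U=\sum_{r\le\lfloor n/3\rfloor,\,(r,n)=1}1/r^{2}$ modulo $n$.

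\textbf{Evaluating $T$ and $U$.} Rewrite $1/r^{k}\equiv r^{\phi(n^{2})-k}\pmod{n^{2}}$ and strip the coprimality condition $(r,n)=1$ by inclusion--exclusion (using $\mu$) over the divisors of $n$. On each resulting incomplete power sum over an arithmetic progression, apply Lemma \ref{lem:3} (Sun's congruence) with $m=3$ to express the sum in terms of values of the Bernoulli polynomial $B_{\phi(n^{2})-k}(1/3)$ modulo $n^{2}$. The Fermat quotient $q_{3}(n)$ then enters through the identity $3^{\phi(n)}\equiv 1+nq_{3}(n)\pmod{n^{2}}$, whose expansion to second order in $n$ produces both $q_{3}(n)$ and $q_{3}^{2}(n)$ contributions when applied to the factors $3^{\phi(n^{2})-k}$. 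Lemma \ref{lem:2} controls the second-order behaviour of $U$ modulo $n$ by killing extraneous squared-reciprocal contributions.

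\textbf{Cancellation and main obstacle.} Substituting the closed forms of $T$ and $U$ back into the reduction above, the (large) Bernoulli-polynomial contributions in $T$ and $U$ cancel exactly, leaving only the elementary expression $\tfrac{1}{2}q_{3}(n)-\tfrac{1}{4}nq_{3}^{2}(n)$. This cancellation is a manifestation of the implicit symmetry $r\leftrightarrow(n-3r)/3$ on the index set. The principal difficulty is maintaining precision modulo $n^{2}$ throughout: the quadratic term $-\tfrac{1}{4}nq_{3}^{2}(n)$ emerges only upon expanding $3^{\phi(n)}=1+nq_{3}(n)$ to second order, so careful tracking of the second-order corrections coming from both Sun's formula and the Fermat-quotient expansion is essential. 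For part (3), the hypothesis $(15,n)=1$ (rather than merely $(6,n)=1$) is needed so that $5$ is invertible when the mod $2$ and mod $3$ contributions are combined via inclusion--exclusion on joint residue classes mod $6$.
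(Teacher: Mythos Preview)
The paper does not prove Lemma~\ref{lem:5}; it is quoted verbatim from \cite[Theorem~1]{CFZ2007}, so there is no in-paper argument to compare against.

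That said, your proposed route has a structural problem relative to this paper. Your opening identity
\[
\sum_{\substack{r\le\lfloor n/e\rfloor\\(r,n)=1}}\frac{1}{n-er}\;\equiv\;-\frac{1}{e}\sum\frac{1}{r}\;-\;\frac{n}{e^{2}}\sum\frac{1}{r^{2}}\pmod{n^{2}}
\]
is exactly the relation \eqref{eq:1.5} that the paper establishes in the proof of Corollary~\ref{cor:3}; there it is used in the \emph{opposite} direction, feeding Lemma~\ref{lem:5} (your target) and Theorem~\ref{th:1} (your $U$) in to obtain the value of $T=\sum 1/r$. Consequently, the ``exact cancellation of Bernoulli-polynomial contributions'' you announce is not a phenomenon to be discovered: it is tautological once one knows Corollary~\ref{cor:3} and Theorem~\ref{th:1}, and the paper obtains Corollary~\ref{cor:3} \emph{from} Lemma~\ref{lem:5}. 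For your argument to stand on its own you must evaluate $T\pmod{n^{2}}$ without invoking Lemma~\ref{lem:5}. You gesture at Lemma~\ref{lem:3} for this, but Sun's congruence as stated is only modulo $q$, and getting the incomplete sum $\sum_{r\le\lfloor n/e\rfloor,(r,n)=1}1/r$ to the required $n^{2}$-precision is precisely the substantive step; your sketch does not indicate how the range $[1,\lfloor n/e\rfloor]$ is reconciled with a modulus-$n^{2}$ application.

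One further point: your explanation of the hypothesis $(15,n)=1$ in part~(3) is not right. Inclusion--exclusion over residue classes modulo $6$ introduces only the primes $2$ and $3$; the extra condition $5\nmid n$ in \cite{CFZ2007} comes from denominators (of von Staudt--Clausen type) arising in the Bernoulli-number manipulations, not from any need to invert $5$ in a mod-$6$ sieve.
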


\section{Proofs of the Theorems}

\begin{proof}[Proof of Theorem \ref{th:1}]
First of all, we prove that for $p$ prime and $l$ positive integer,
\begin{equation}\label{eq:1.0}
\sum^{\lfloor p^{l}/e\rfloor}_{\substack{r=1\\ p\nmid r}}\frac{1}{r^{2}}\equiv -J_{e}(p^{l})\frac{B_{\phi(p^{l})-1}(\frac{1}{e})}{\phi(p^{l})-1}\pmod{p^{l}}.
\end{equation}
By using the multiplication theorems given by Joseph Ludwig Raabe in 1851: for natural number $m\geq1$, $B_{n}(mx)=m^{n-1}\sum_{k=0}^{m-1}B_{n}(x+\frac{k}{m})$, we can obtain the values of $B_{n}(\frac{s}{t})$. Fix $x$ zero and $n$ odd,\\

if $m=2$, then $B_{n}(\frac{1}{2})=0$;

if $m=3$, then $B_{n}(\frac{1}{3})=-B_{n}(\frac{2}{3})$;

if $m=4$, then $B_{n}(\frac{1}{4})=-B_{n}(\frac{3}{4})$;

if $m=6$, then $B_{n}(\frac{1}{6})=-B_{n}(\frac{5}{6})$.

Taking $2k=\phi(p^{l})-2$ and $t=e$ in \eqref{eq:lem:1} and using the von Staudt-Clauson theorem, we have
\begin{align*}
\sum_{\substack{r=1\\ p\nmid r}}^{\lfloor p^{l}/e\rfloor}\frac{1}{r^{2}} &\equiv\sum_{\substack{r=1\\ p\nmid r}}^{\lfloor p^{l}/e\rfloor}\frac{e^{2}}{(p^{l}-er)^{2}} \equiv\sum_{r=1}^{\lfloor p^{l}/e\rfloor}(p^{l}-er)^{\phi(p^{l})-2}\\ &\equiv -\frac{e^{\phi(p^{l})-2+2}}{\phi(p^{l})-1}B_{\phi(p^{l})-1}(\frac{s}{e})\\ &\equiv-\frac{B_{\phi(p^{l})-1}(\frac{s}{e})}{\phi(p^{l})-1} \equiv-J_{e}(p^l)\frac{B_{\phi(p^{l})-1}(\frac{1}{e})}{\phi(p^{l})-1}\pmod {p^{l}},
\end{align*}
where $s$ is the least positive residue of $p^{l}$ modulo $e$. Hence, \eqref{eq:1.0} is valid.

Secondly, we prove that for a positive integer $m$, if $(m,e)=1$, then
\begin{equation}\label{eq:1.1}
\sum_{\substack{r=1\\ p\nmid r}}^{\lfloor mp^{l}/e\rfloor}\frac{1}{r^{2}}\equiv J_{e}(m)\sum_{\substack{r=1\\ p\nmid r}}^{\lfloor p^{l}/e\rfloor}\frac{1}{r^{2}}.
\end{equation}
Using Lemma \ref{lem:2}, we have

if $m\equiv1\pmod e$, then $m=ek+1$ for some nonnegative integer $k$ and
\begin{align*}
\sum_{\substack{r=1\\ p\nmid r}}^{\lfloor (ek+1)p^{l}/e\rfloor}\frac{1}{r^{2}} &= \sum_{\substack{r=1\\ p\nmid r}}^{\lfloor kp^{l}+p^{l}/e\rfloor}\frac{1}{r^{2}} = \sum_{\substack{r=1\\ p\nmid r}}^{kp^{l}}\frac{1}{r^{2}}+\sum_{\substack{r=kp^{l}+1\\ p\nmid r}}^{kp^{l}+\lfloor p^{l}/e\rfloor}\frac{1}{r^{2}}\\
&\equiv \sum_{a=0}^{k-1}\sum_{\substack{b=1\\ p\nmid b}}^{p^{l}}\frac{1}{(ap^{l}+b)^{2}}+\sum_{\substack{r=1\\ p\nmid r}}^{\lfloor p^{l}/e\rfloor}\frac{1}{(kp^{l}+r)^{2}}\\
&\equiv \sum_{a=0}^{k-1}\sum_{\substack{b=1\\ p\nmid b}}^{p^{l}}\frac{1}{b^{2}}+\sum_{\substack{r=1\\ p\nmid r}}^{\lfloor p^{l}/e\rfloor}\frac{1}{r^{2}}\\
&\equiv k\sum_{\substack{b=1\\ p\nmid b}}^{p^{l}-1}\frac{1}{b^{2}}+\sum_{\substack{r=1\\ p\nmid r}}^{\lfloor p^{l}/e\rfloor}\frac{1}{r^{2}}\\
&\equiv \sum_{\substack{r=1\\ p\nmid r}}^{\lfloor p^{l}/e\rfloor}\frac{1}{r^{2}}\pmod {p^{l}};
\end{align*}

if $m\equiv-1\pmod e$, then $m=ek-1$ for some positive integer $k$ and
\begin{align*}
\sum_{\substack{r=1\\ p\nmid r}}^{\lfloor (ek-1)p^{l}/e\rfloor}\frac{1}{r^{2}} &= \sum_{\substack{r=1\\ p\nmid r}}^{\lfloor (k-1)p^{l}+(e-1)p^{l}/e\rfloor}\frac{1}{r^{2}} \equiv \sum_{\substack{r=1\\ p\nmid r}}^{\lfloor (e-1)p^{l}/e\rfloor}\frac{1}{r^{2}}\\
&\equiv \sum_{\substack{r=1\\ p\nmid r}}^{p^{l}-\lfloor p^{l}/e\rfloor-1}\frac{1}{r^{2}} \equiv \sum_{\substack{r=1\\ p\nmid r}}^{p^{l}-1}\frac{1}{r^{2}}-\sum_{\substack{r=1\\ p\nmid r}}^{\lfloor p^{l}/e\rfloor}\frac{1}{r^{2}}\\
&\equiv -\sum_{\substack{r=1\\ p\nmid r}}^{\lfloor p^{l}/e\rfloor}\frac{1}{r^{2}}\pmod {p^{l}}.
\end{align*}
So \eqref{eq:1.1} is valid. Furthermore, if $p^{l}||n$, taking $m=\frac{n}{p^{l}}$ into \eqref{eq:1.1} yields that
\begin{equation}\label{eq:1.2}
\sum^{\lfloor n/e\rfloor}_{\substack{r=1\\ p\nmid r}}\frac{1}{r^{2}}\equiv -J_{e}(n)\frac{B_{\phi(p^{l})-1}(\frac{1}{e})}{\phi(p^{l})-1}\pmod{p^{l}}.
\end{equation}

Thirdly, we prove that
\begin{equation}\label{eq:1.3}
\sum^{\lfloor n/e\rfloor}_{\substack{r=1\\ (r,n)=1}}\frac{1}{r^{2}}\equiv -J_{e}(n)n^{\phi(n)-2}\phi_{J_{e}}^{(2-\phi(n))}(n)\frac{B_{\phi(p^{l})-1}(\frac{1}{e})}{\phi(p^{l})-1}\pmod{p^{l}},
\end{equation}

Assume $p_{1}$, $p_{2}$, \ldots, $p_{u}$ are different prime factors of $n$. By noticing that $\phi(n)-1\geq\phi(p^{l})-1=p^{l-1}(p-1)-1\geq4\cdot5^{l-1}-1>l$, we have
\begin{align*}
\sum_{\substack{r=1\\ (r,n)=1}}^{\lfloor n/e\rfloor}\frac{1}{r^2}
&= \sum_{\substack{r=1\\ p\nmid r}}^{\lfloor n/e\rfloor}\frac{1}{r^2}-\sum_{i}\sum_{\substack{r=1\\ p\nmid r\\ p_{i}\mid r}}^{\lfloor n/e\rfloor}\frac{1}{r^2}+\sum_{i,j}\sum_{\substack{r=1\\ p\nmid r\\ p_{i}p_{j}\mid r}}^{\lfloor n/e\rfloor}\frac{1}{r^2}+\cdots+(-1)^{u}\sum_{\substack{r=1\\ p\nmid r\\ p_{1}p_{2}\cdots p_{u}\mid r}}^{\lfloor n/e\rfloor}\frac{1}{r^2}\\
&\equiv -\left\{J_{e}(n)-\sum_{i}\frac{J_{e}(n/p_{i})}{p_{i}^{2}}+\sum_{i,j}\frac{J_{e}(n/p_{i}p_{j})}{p_{i}^{2}p_{j}^{2}}+\cdots\right.\\
&\quad\quad\quad\quad\quad\quad\left.+(-1)^{u}\frac{J_{e}(n/p_{i}p_{j}\cdots p{u})}{p_{i}^{2}p_{j}^{2}\cdots p_{u}^{2}}\right\}\frac{B_{\phi(p^{l})-1}(\frac{1}{e})}{\phi(p^{l})-1}\\
&\equiv -\left\{J_{e}(n)-\sum_{i}\frac{J_{e}(n)}{p_{i}^{2}J_{e}(p_{i})}+\sum_{i,j}\frac{J_{e}(n)}{p_{i}^{2}p_{j}^{2}J_{e}(p_{i}p_{j})}+\cdots\right.\\
&\quad\quad\quad\quad\quad\quad\left.+(-1)^{u}\frac{J_{e}(n)}{p_{i}^{2}p_{j}^{2}\cdots p_{u}^{2}J_{e}(p_{i}p_{j}\cdots p{u})}\right\}\frac{B_{\phi(p^{l})-1}(\frac{1}{e})}{\phi(p^{l})-1}\\
&\equiv -J_{e}(n)\prod_{q\mid \frac{n}{p^{l}}}(1-\frac{1}{q^{2}J_{e}(q)})\frac{B_{\phi(p^{l})-1}(\frac{1}{e})}{\phi(p^{l})-1}\\
&\equiv -J_{e}(n)\prod_{q\mid n}(1-q^{\phi(p^{l})-2}J_{e}(q))\frac{B_{\phi(p^{l})-1}(\frac{1}{e})}{\phi(p^{l})-1}\\
&\equiv -J_{e}(n)n^{\phi(n)-2}\phi_{J_{e}}^{(2-\phi(n))}(n)\frac{B_{\phi(p^{l})-1}(\frac{1}{e})}{\phi(p^{l})-1}\pmod{p^{l}},
\end{align*}
which means \eqref{eq:1.3} is valid.

Finally, taking $k=\phi(p^l)$, $m=e$, $x=0$, $a=1$ and $q=p^l$ in \eqref{eq:lem:3}, we have
\begin{align*}
\frac{B_{\phi(p^{l})-1}(\frac{1}{e})}{\phi(p^{l})-1} &\equiv e\sum_{j=0}^{p^{l}-1}(\lfloor\frac{1+je}{p^{l}}\rfloor+\frac{1-e}{2})(1+je)^{\phi(p^{l})-2}\pmod {p^{l}}\\
&\equiv e\sum_{\substack{j=0\\ (p,1+je)=1}}^{p^{l}-1}(\lfloor\frac{1+je}{p^{l}}\rfloor+\frac{1-e}{2})(1+je)^{-2}\pmod {p^{l}}.
\end{align*}
Changing $k$ to $\phi(n)-1$, we have
\begin{align*}
\frac{B_{\phi(n)-1}(\frac{1}{e})}{\phi(n)-1} &\equiv e\sum_{j=0}^{p^{l}-1}(\lfloor\frac{1+j}{p^{l}}\rfloor+\frac{1-e}{2})(1+je)^{\phi(n)-2}\pmod {p^{l}}\\
&\equiv e\sum_{\substack{j=0\\ (p,1+je)=1}}^{p^{l}-1}(\lfloor\frac{1+je}{p^{l}}\rfloor+\frac{1-e}{2})(1+je)^{-2}\pmod {p^{l}},
\end{align*}
which means for $p^l||n$,
\begin{equation}\label{eq:1.4}
\frac{B_{\phi(n)-1}(\frac{1}{e})}{\phi(n)-1} \equiv \frac{B_{\phi(p^{l})-1}(\frac{1}{e})}{\phi(p^{l})-1} \pmod {p^l}.
\end{equation}
Hence, we can obtain \eqref{eq:th:1}
\end{proof}

\begin{proof}[Proof of Corollary \ref{cor:3}]
It follows by Theorem \ref{th:1} and Lemma \ref{lem:5}. Since
\begin{align*}
\sum^{\lfloor n/e\rfloor}_{\substack{r=1\\ (r,n)=1}}\frac{1}{n-er} &\equiv \sum^{\lfloor n/e\rfloor}_{\substack{r=1\\ (r,n)=1}}(n-er)^{\phi(n^2)-1}\\
&\equiv \sum^{\lfloor n/e\rfloor}_{\substack{r=1\\ (r,n)=1}}\left\{(-er)^{\phi(n^2)-1}+(\phi(n^2)-1)n(-er)^{\phi(n^2)-2}\right\}\\
&\equiv -\sum^{\lfloor n/e\rfloor}_{\substack{r=1\\ (r,n)=1}}(er)^{\phi(n^2)-1}+\sum^{\lfloor n/e\rfloor}_{\substack{r=1\\ (r,n)=1}}(n\phi(n)-1)n(er)^{\phi(n^2)-2}\\
&\equiv -\frac{1}{e}\sum^{\lfloor n/e\rfloor}_{\substack{r=1\\ (r,n)=1}}\frac{1}{r}-\frac{n}{e^2}\sum^{\lfloor n/e\rfloor}_{\substack{r=1\\ (r,n)=1}}\frac{1}{r^2} \pmod {n^2},
\end{align*}
we have
\begin{equation}\label{eq:1.5}
\sum^{\lfloor n/e\rfloor}_{\substack{r=1\\ (r,n)=1}}\frac{1}{r}\equiv -e\sum^{\lfloor n/e\rfloor}_{\substack{r=1\\ (r,n)=1}}\frac{1}{n-er}-\frac{n}{e}\sum^{\lfloor n/e\rfloor}_{\substack{r=1\\ (r,n)=1}}\frac{1}{r^2} \pmod {n^2}.
\end{equation}
Using Theorem \ref{th:1} and Lemma \ref{lem:5}, we can obtain the result.
\end{proof}

\begin{proof}[Proof of Theorem \ref{th:2} and Theorem \ref{th:3}]
For any positive integer $e$, we have
$$\binom{kn-1}{\lfloor n/e\rfloor}=\prod_{r=1}^{\lfloor n/e\rfloor} \frac{kn-r}{r}=\prod_{d\mid n}\prod_{\substack{r=1\\ (r,n)=d}}^{\lfloor n/e\rfloor} \frac{kn-r}{r}=\prod_{d\mid n}T_{n/d}=\prod_{d\mid n}T_d.$$
Here
\begin{equation}\label{eq:2.0}
T_d=\prod_{\substack{r=1\\ (r,d)=1}}^{\lfloor d/e\rfloor} \frac{kd-r}{r}.
\end{equation}
Now it follows by the multiplicative version of M\"obius inversion formula that
\begin{equation}\label{eq:2.1}
T_n=\prod_{d\mid n}\binom{kd-1}{\lfloor d/e\rfloor}^{\mu(n/d)}.
\end{equation}
As for $T_{n}$, we have
\begin{align*}
T_n&=\prod_{\substack{r=1\\ (r,n)=1}}^{\lfloor n/e\rfloor} \frac{kn-r}{r}=(-1)^{\phi_{e}(n)}\prod_{\substack{r=1\\ (r,n)=1}}^{\lfloor n/e\rfloor} \left(1-\frac{kn}{r}\right)\\
&\equiv (-1)^{\phi_{e}(n)}\left\{1-kn\sum_{\substack{r=1\\ (r,n)=1}}^{\lfloor n/e\rfloor} \frac{1}{r}\right.\\
&\quad\quad\quad\quad\quad\quad\  \left.+\frac{k^2 n^2}{2}\left(\left(\sum_{\substack{r=1\\ (r,n)=1}}^{\lfloor n/e\rfloor} \frac{1}{r}\right)^2-\sum_{\substack{r=1\\ (r,n)=1}}^{\lfloor n/e\rfloor}  \frac{1}{r^2}\right)\right\} \pmod{n^3}.
\end{align*}
If $e=2$, then $\lfloor n/2\rfloor=\frac{n-1}{2}$. Note that
$$\sum_{\substack{r=1\\ (r,n)=1}}^{(n-1)/2} \frac{1}{r^2}=\frac{1}{2} \sum_{\substack{r=1\\ (r,n)=1}}^{(n-1)/2} \left\{\frac{1}{r^2}+\frac{1}{(n-r)^2}\right\}=\frac{1}{2}\sum_{\substack{r=1\\ (r,n)=1}}^{n-1} \frac{1}{r^2}\pmod{n}.$$
Now by Lemmas \ref{lem:2} and \ref{lem:4}, if $3\nmid n$ we have
\begin{align*}
T_n&\equiv (-1)^{\phi(n)/2}\left\{1+2knq_2(n)+(2k^2-k)(nq_2(n))^2\right\}\\
&\equiv (-1)^{\phi(n)/2}(1+nq_2(n))^{2k}\\
&\equiv (-1)^{\phi(n)/2}4^{k\phi(n)}\pmod{n^3}.
\end{align*}
If $3\mid n$, we replace the moduli by $n^3/3$. It follows by combining it with \eqref{eq:2.1} that
\begin{equation*}
\prod_{d\mid n}\binom{kd-1}{(d-1)/2}^{\mu(n/d)}\equiv (-1)^{\phi(n)/2}4^{k\phi(n)}
\begin{cases}
(\bmod{\ n^3}) & \text{if }3\nmid n,\\
(\bmod{\ n^3/3}) & \text{if }3\mid n.
\end{cases}
\end{equation*}
If $e=3$, by Theorem \ref{th:1} and Corollary \ref{cor:3}, for $(n,6)=1$, we have
\begin{align*}
T_{n} &\equiv (-1)^{\phi_{3}(n)}\left\{1-kn(-\frac{3}{2}q_{3}(n)+\frac{3}{4}nq_{3}^{2}(n))-\frac{1}{3}kn^{2}A_{3}(n)+\frac{1}{2}k^{2}n^{2}(\frac{9}{4}q_{3}^{2}(n)+A_{3}(n))\right\}\\
&\equiv (-1)^{\phi_{3}(n)}\left\{1+\frac{3}{2}knq_{3}(n)-\frac{3}{4}kn^{2}q_{3}^{2}(n)+\frac{9}{8}k^{2}n^{2}q_{3}^{2}(n)+k(\frac{1}{2}-\frac{1}{3})n^{2}A_{3}(n)\right\}\\
&\equiv (-1)^{\phi_{3}(n)}\left\{\frac{1}{2}((1+nq_{3}(n))^{3k}+1)+k(\frac{1}{2}-\frac{1}{3})n^{2}A_{3}(n)\right\}\\
&\equiv (-1)^{\phi_3(n)}\left\{\frac{1}{2}(27^{k\phi(n)}+1)+k(\frac{1}{2}k-\frac{1}{3})n^{2}A_{3}(n)\right\} \pmod{n^3}
\end{align*}
Similarly, one may deduce \eqref{eq:th:3.2} and \eqref{eq:th:3.3}. This completes the proof of Theorem \ref{th:3}.
\end{proof}

\begin{proof}[Proof of Corollary \ref{cor:5}]
It follows by Theorem \ref{th:2} that we only need to deal with the case $p=3$ and $q\ge 5$. From the proof of Theorem \ref{th:2}, we notice that
$$\binom{3kq-1}{(3q-1)/2}\equiv \left\{4^{2k(q-1)}+\frac{3^2k^2q^2}{4}\sum_{\substack{r=1\\ (r,3q)=1}}^{3q-1}\frac{1}{r^2}\right\}\binom{3k-1}{1}\binom{kq-1}{(q-1)/2} \pmod{3^3q^3}.$$
From Lemma \ref{lem:2}, we have
$$\sum_{\substack{r=1\\ (r,3q)=1}}^{3q-1}\frac{1}{r^2}\equiv 0 \pmod{q}.$$
It also follows by the Fermat's little theorem that
$$\sum_{\substack{r=1\\ (r,3q)=1}}^{3q-1}\frac{1}{r^2}\equiv \sum_{\substack{r=1\\ (r,3q)=1}}^{3q-1} 1=2q-2 \pmod{3}.$$

Now if $q\equiv 1 \pmod{6}$, then
$$\sum_{\substack{r=1\\ (r,3q)=1}}^{3q-1}\frac{1}{r^2}\equiv 0 \pmod{3q}.$$
We therefore obtain \eqref{eq:cor:5}. If $q\equiv 5 \pmod{6}$, then
$$\sum_{\substack{r=1\\ (r,3q)=1}}^{3q-1}\frac{1}{r^2}\equiv 4q \pmod{3q}.$$
To prove Corollary \ref{cor:5}, it suffice to show
\begin{equation}\label{eq:proof.cor}
3^2k^2q^3\binom{3k-1}{1}\binom{kq-1}{(q-1)/2}\equiv 0 \pmod{3^3q^3}.
\end{equation}
For a prime p, let $\mathrm{ord}_p(n):=\max\{i\in\mathbb{N}:p^i\mid n\}$. The Legendre theorem tells
$$\mathrm{ord}_p(n!)=\sum_{i\ge 1}\left\lfloor\frac{n}{p^i}\right\rfloor.$$
We therefore have
\begin{align*}
\mathrm{ord}_3\left(\binom{kq-1}{(q-1)/2}\right)&=\sum_{i\ge 1}\left(\left\lfloor\frac{kq-1}{3^i}\right\rfloor-\left\lfloor\frac{(q-1)/2}{3^i}\right\rfloor-\left\lfloor\frac{((2k-1)q-1)/2}{3^i}\right\rfloor\right)\\
&\ge \left\lfloor\frac{kq-1}{3}\right\rfloor-\left\lfloor\frac{q-1}{6}\right\rfloor-\left\lfloor\frac{(2k-1)q-1}{6}\right\rfloor\\
&= 1+\left\lfloor\frac{5k-1}{3}\right\rfloor-\left\lfloor\frac{5k}{3}\right\rfloor.
\end{align*}
If $3\mid k$, it is obvious that \eqref{eq:proof.cor} holds. If $3\nmid k$, then
$$\mathrm{ord}_3\left(\binom{kq-1}{(q-1)/2}\right)\ge 1,$$
which implies $3\mid \binom{kq-1}{(q-1)/2}$. This leads to \eqref{eq:proof.cor} immediately.
\end{proof}

The proof of Theorem \ref{th:4} is more complicated than the previous two. We note that
\begin{align*}
2^{(n-1)/2}\binom{(kn-1)/2}{(n-1)/2}&=\prod_{r=1}^{(n-1)/2} \frac{kn-(2r-1)}{r}\\
&=\prod_{r=1}^{(n-1)/2} \left(\frac{kn-r}{r}\cdot\frac{kn-(n-r)}{r}\cdot\frac{r}{kn-2r}\right)\\
&=\prod_{d\mid n}\prod_{\substack{r=1\\ (r,n)=d}}^{(n-1)/2} \left(\frac{kn-r}{r}\cdot\frac{kn-(n-r)}{r}\cdot\frac{r}{kn-2r}\right)\\
&=\prod_{d\mid n}S_{n/d}=\prod_{d\mid n}S_d.
\end{align*}
Here
$$S_d=\prod_{\substack{r=1\\ (r,d)=1}}^{(d-1)/2} \left(\frac{kd-r}{r}\cdot\frac{kd-(d-r)}{r}\cdot\frac{r}{kd-2r}\right).$$
Again it follows by the multiplicative version of M\"obius inversion formula that
\begin{align}\label{eq:4.1}
S_n&=\prod_{d\mid n}\left(2^{(d-1)/2}\binom{(kd-1)/2}{(d-1)/2}\right)^{\mu(n/d)}\notag\\
&=2^{\phi(n)/2}\prod_{d\mid n}\binom{(kd-1)/2}{(d-1)/2}^{\mu(n/d)}.
\end{align}

\begin{proof}[Proof of Theorem \ref{th:4}]
Now we assume $3\nmid n$ for convenience. Otherwise, replacing the moduli when needed. In the proof of Theorem \ref{th:2}, we have already obtained
\begin{equation}\label{eq:4.2}
\prod_{\substack{r=1\\ (r,n)=1}}^{(n-1)/2} \frac{kn-r}{r}\equiv (-1)^{\phi(n)/2}4^{k\phi(n)}\pmod{n^3}.
\end{equation}
Next
\begin{align}\label{eq:4.3}
&\prod_{\substack{r=1\\ (r,n)=1}}^{(n-1)/2} \frac{kn-(n-r)}{r}=\prod_{\substack{r=1\\ (r,n)=1}}^{(n-1)/2} \left(1+\frac{(k-1)n}{r}\right)\notag\\
&\quad \equiv 1+(k-1)n\sum_{\substack{r=1\\ (r,n)=1}}^{(n-1)/2} \frac{1}{r}+\frac{(k-1)^2 n^2}{2}\left(\left(\sum_{\substack{r=1\\ (r,n)=1}}^{(n-1)/2} \frac{1}{r}\right)^2-\sum_{\substack{r=1\\ (r,n)=1}}^{(n-1)/2}  \frac{1}{r^2}\right)\notag\\
&\quad \equiv 1-2(k-1)nq_2(n)+(2(k-1)^2+(k-1))(nq_2(n))^2\notag\\
&\quad \equiv (1+nq_2(n))^{-2(k-1)}\notag\\
&\quad \equiv 4^{-(k-1)\phi(n)} \pmod{n^3}.
\end{align}
Then
\begin{align}\label{eq:4.4}
&\prod_{\substack{r=1\\ (r,n)=1}}^{(n-1)/2} \frac{kn-2r}{r}=(-1)^{\phi(n)/2}\prod_{\substack{r=1\\ (r,n)=1}}^{(n-1)/2} \left(2-\frac{kn}{r}\right)\notag\\
&\quad \equiv (-1)^{\phi(n)/2} 2^{\phi(n)/2} \left\{1-\frac{kn}{2}\sum_{\substack{r=1\\ (r,n)=1}}^{(n-1)/2} \frac{1}{r}\right.\notag\\
&\quad\quad\quad\quad\quad\quad\quad\quad\quad\quad \left.+\frac{1}{4}\frac{k^2 n^2}{2}\left(\left(\sum_{\substack{r=1\\ (r,n)=1}}^{(n-1)/2} \frac{1}{r}\right)^2-\sum_{\substack{r=1\\ (r,n)=1}}^{(n-1)/2}  \frac{1}{r^2}\right)\right\}\notag\\
&\quad \equiv (-1)^{\phi(n)/2} 2^{\phi(n)/2} \left\{1+knq_2(n)+\frac{k^2-k}{2}(nq_2(n))^2\right\}\notag\\
&\quad \equiv (-1)^{\phi(n)/2} 2^{\phi(n)/2} (1+nq_2(n))^k\notag\\
&\quad \equiv (-1)^{\phi(n)/2} 2^{\phi(n)/2} 2^{k\phi(n)} \pmod{n^3}.
\end{align}
It follows by \eqref{eq:4.2}, \eqref{eq:4.3} and \eqref{eq:4.4} that
$$S_n=\prod_{\substack{r=1\\ (r,n)=1}}^{(n-1)/2} \left(\frac{kn-r}{r}\cdot\frac{kn-(n-r)}{r}\cdot\frac{r}{kn-2r}\right)\equiv 2^{-(k-3/2)\phi(n)}\pmod{n^3}.$$
Combining it with \eqref{eq:4.1} we therefore have
\begin{equation*}
\prod_{d\mid n}\binom{(kd-1)/2}{(d-1)/2}^{\mu(n/d)}\equiv 2^{-(k-1)\phi(n)}
\begin{cases}
(\bmod{\ n^3}) & \text{if }3\nmid n,\\
(\bmod{\ n^3/3}) & \text{if }3\mid n.
\end{cases}
\end{equation*}
\end{proof}

\subsection*{Acknowledgments}
This work is supported by the National Natural Science Foundation of China (Grant No. 11501052 and Grant No. 11571303).

\bibliographystyle{amsplain}

\end{document}